\newtheorem{theorem}{Theorem}
\newtheorem{corollary}[theorem]{Corollary}
\newtheorem{lemma}[theorem]{Lemma}
\newtheorem{proposition}[theorem]{Proposition}
\newtheorem{definition}[theorem]{Definition} 
\newtheorem{observation}[theorem]{Observation} \theoremstyle{remark}
\newtheorem{remark}[theorem]{Remark}
\newcommand{\comment}[1]{}
\newcommand{\R}{\mathbb R}
\newcommand{\N}{\mathbb N}
\newcommand{\EE}{\mathbb{E}}
\newcommand{\eps}{\varepsilon}
\newcommand{\supp}{{\rm supp}}
\newcommand{\Vol}{{\rm Vol}}
\providecommand{\Prob}[1]{\mathbb{P}\left(#1\right)}
\pgfplotsset{compat=1.14}
\begin{document}
\title{Further investigations of R\'enyi entropy power inequalities and an entropic characterization of s-concave densities}
\author{Jiange Li, Arnaud Marsiglietti, James Melbourne
}
\date{}
\maketitle

\begin{abstract}
We investigate the role of convexity in R\'enyi entropy power inequalities. After proving that a general R\'enyi entropy power inequality in the style of Bobkov-Chistyakov (2015) fails when the R\'enyi parameter $r\in(0,1)$, we show that random vectors with $s$-concave densities do satisfy such a R\'enyi entropy power inequality. Along the way, we establish the convergence in the Central Limit Theorem for R\'enyi entropies of order $r\in(0,1)$ for log-concave densities and for compactly supported, spherically symmetric and unimodal densities, complementing a celebrated result of Barron (1986). Additionally, we give an entropic characterization of the class of $s$-concave densities, which extends a classical result of Cover and Zhang (1994).
\end{abstract}

\section{Introduction}

Let $X$ be a random vector in $\R^d$. Suppose that $X$ has the density $f$ with respect to the Lebesgue measure. For $r \in (0,1) \cup (1,\infty)$, the R\'enyi entropy of order $r$ (or simply, $r$-R\'enyi entropy) is defined as
\begin{equation}\label{eq:r-renyi-entropy}
 h_r(X) = \frac{1}{1-r}\log \int_{\R^d} f(x)^r dx.
\end{equation}
For $r\in\{0, 1, \infty\}$, the $r$-R\'enyi entropy can be extended continuously such that the RHS of \eqref{eq:r-renyi-entropy} is
$\log |\supp(f)|$ for $r=0$; $-\int_{\R^d}f(x)\log f(x)dx$ for $r=1$; and $- \log \|f\|_\infty$ for $r=\infty$.
The case $r=1$ corresponds to the classical Shannon differential entropy. Here, we denote by $|\supp(f)|$ the Lebesgue measure of the support of $f$, and $\|f\|_\infty$ represents the essential supremum of $f$.  The $r$-R\'enyi entropy power is defined by
$$
 N_r(X) = e^{2 h_r(X)/d}.
$$
In the following, we drop the subscript when $r=1$. 

The classical Entropy Power Inequality (henceforth, EPI) of Shannon \cite{Sha48} and Stam \cite{Sta59}, states that the entropy power $N(X)$ is super-additive on the sum of independent random vectors. There has been recent success in obtaining
extensions of the EPI from the Shannon differential entropy to $r$-R\'enyi entropy. In \cite{BC14,BC15:1}, Bobkov and Chistyakov showed that, at the expense of an absolute constant $c>0$, the following R\'enyi EPI of order $r\in[1,\infty]$ holds 
\begin{equation}\label{eq:BC-REPI}
N_r(X_1 + \cdots + X_n) \geq c \sum_{i=1}^n N_r(X_i).
\end{equation}
Ram and Sason soon after gave a sharpened constant depending on the number of summands \cite{RS16}.
Madiman, Melbourne, and Xu sharpened constants in the $r=\infty$ case by identifying extremizers in \cite{MMX17:2,XMM17:isit:1}.  Savar\'e and Toscani \cite{ST14} showed that a modified R\'enyi entropy power was concave along the solution of a nonlinear heat equation, which generalizes Costa's concavity of entropy power \cite{Cos85b}.  Bobkov and Marsiglietti \cite{BM16} proved the following variant of R\'enyi EPI
\begin{equation}\label{eq:alpha-modifi}
N_r(X+Y)^\alpha\geq N_r(X)^\alpha+N_r(Y)^\alpha
\end{equation}
for $r>1$ and some exponent $\alpha$ only depending on $r$. It is clear that \eqref{eq:alpha-modifi} holds for more than two summands. Improvement of the exponent $\alpha$ was given by Li \cite{Li17}. 

One of our goals is to establish analogues of \eqref{eq:BC-REPI} and \eqref{eq:alpha-modifi} when the R\'enyi parameter $r\in(0, 1)$. Both \eqref{eq:BC-REPI} and \eqref{eq:alpha-modifi} can be derived from Young's convolution inequality in conjunction with the entropic comparison inequality $h_{r_1}(X)\geq h_{r_2}(X)$ for any $0\leq r_1\leq r_2$. The latter is an immediate consequence of Jensen's inequality.  When  the R\'enyi parameter $r\in(0, 1)$, analogues of \eqref{eq:BC-REPI} and \eqref{eq:alpha-modifi} require a converse of the entropic comparison inequality aforementioned. This technical issue prevents a general R\'enyi EPI of order $r\in(0, 1)$ for generic random vectors. Our first result shows that a general R\'enyi EPI of the form \eqref{eq:BC-REPI} indeed fails for all $r \in (0,1)$. 
\begin{theorem} \label{thm: REPI failure}
For any $r \in (0,1)$ and $\varepsilon >0$, there exist independent random vectors $X_1, \cdots, X_n$ in $\R^d$, for some $d \geq 1$ and $n \geq 2$, such that
\begin{equation}\label{eq:failure}
N_r(X_1 + \cdots + X_n) < \varepsilon \sum_{i=1}^n N_r(X_i).
\end{equation}
\end{theorem}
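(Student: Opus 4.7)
The plan is to construct independent identically distributed $X_1, \ldots, X_n \in \R^d$ whose common density mixes a sharp spike at the origin with weight $1-\epsilon$ and a spread-out tail with weight $\epsilon$ placed far from the origin. For $r \in (0,1)$, the spread tail drives $\int f^r$ upward, inflating $N_r(X_i)$; but in the $n$-fold convolution $f^{*n}$, the bumps arising from the multinomial expansion -- one for each count of ``tail'' summands chosen -- can be made to have disjoint supports by placing the tail far from the origin, and the leading non-spike contribution is the ``one-tail'' term, which gives $\int (f^{*n})^r \asymp n^r \int f^r$. Exponentiating produces $N_r(X_1 + \cdots + X_n)/N_r(X_1) \asymp n^{2r/(d(1-r))}$, which is smaller than $n$ whenever $r < d/(d+2)$; taking both $d$ and $n$ large enough then drives the ratio $N_r(X_1 + \cdots + X_n)/\sum_i N_r(X_i)$ below any prescribed $\varepsilon$.

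Concretely, given $r\in(0,1)$ and $\varepsilon>0$, I would first choose $d\in\N$ with $d>2r/(1-r)$ so that $\beta:=1-2r/(d(1-r))>0$, then $n\in\N$ with $n^{-\beta}<\varepsilon$, and finally parameters $\delta, L, \epsilon>0$ and $v\in\R^d$ satisfying $\delta\ll 1\ll L$, $|v|\gg nL$, and $n\epsilon\ll 1$, with density
\[
f(x)=(1-\epsilon)\,\delta^{-d}\mathbbm{1}_{[0,\delta]^d}(x)+\epsilon\,L^{-d}\mathbbm{1}_{v+[0,L]^d}(x).
\]
Since the spike $s := \delta^{-d}\mathbbm{1}_{[0,\delta]^d}$ and the tail $g := L^{-d}\mathbbm{1}_{v+[0,L]^d}$ have disjoint supports, one gets $\int f^r=(1-\epsilon)^r\delta^{d(1-r)}+\epsilon^r L^{d(1-r)}$, dominated by the tail term $\epsilon^r L^{d(1-r)}$ once $\delta$ is small. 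For the sum, I would expand
\[
f^{*n}=\sum_{k=0}^{n}\binom{n}{k}(1-\epsilon)^{n-k}\epsilon^{k}\,s^{*(n-k)}*g^{*k},
\]
and use $|v|\gg nL$ to ensure the $n+1$ summands (centered near $kv$) have disjoint supports. The product structure of the box-uniform together with an Irwin--Hall-type calculation yields $\int(g^{*k})^r\asymp L^{d(1-r)}k^{d(1-r)/2}$ for $k\geq 1$, and, since $n\delta\ll L$, $s^{*(n-k)}*g^{*k}$ can be replaced by $g^{*k}$ up to negligible error. The ``one-tail'' term ($k=1$) then contributes $n^r\epsilon^r L^{d(1-r)}$, which dominates both the ``pure-spike'' term ($k=0$) of order $\delta^{d(1-r)}n^{d(1-r)/2}$ (made subdominant by choosing $\delta$ small) and each $k\geq 2$ term (whose ratio to $k=1$ is $\lesssim(n\epsilon)^{(k-1)r}k^{d(1-r)/2}/(k!)^r$, made subdominant by $n\epsilon\to 0$).

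Assembling yields $\int(f^{*n})^r/\int f^r\asymp n^r$, hence $N_r(X_1+\cdots+X_n)/N_r(X_1)\asymp n^{2r/(d(1-r))}=n^{1-\beta}$ and therefore $N_r(X_1+\cdots+X_n)/\sum_i N_r(X_i)\asymp n^{-\beta}<\varepsilon$. The main technical obstacle is the quantitative control of the $k\geq 2$ multinomial terms and the $k=0$ pure-spike term; this is resolved by the two-parameter choice $\delta\to 0$ (killing $k=0$) and $n\epsilon\to 0$ (killing $k\geq 2$) performed after $d$, $n$, and $L$ are fixed, together with the separation $|v|\gg nL$ that ensures the bumps indexed by different $k$ contribute additively to $\int(f^{*n})^r$ rather than interfering.
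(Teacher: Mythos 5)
Your construction is correct, and it takes a genuinely different route from the paper. The paper truncates a spherically symmetric density of the form $C_R(1+|x|)^{-p}\mathbf 1_{B_R}$ chosen (with $p\in(d^*+2,\,d^*/r]$, $d^*>2r/(1-r)$) so that the untruncated density has finite covariance but infinite $r$-R\'enyi entropy; it then invokes the convergence $h_r(Z_n)\to h_r(Z)$ along the CLT for compactly supported, spherically symmetric unimodal densities (Theorem \ref{CLT-radial-unim}, built on the Hoeffding bounds and the convolution-stability of that class), so that the left side of \eqref{eq:failure} stays bounded while the right side is driven to infinity by letting $R\to\infty$. You instead keep everything finite and explicit: the spike-plus-far-bump mixture, the multinomial expansion of $f^{*n}$ into terms with pairwise disjoint supports (so that $\int (f^{*n})^r$ splits additively), and the elementary support-based bound $\int h^r\le|\mathrm{supp}(h)|^{1-r}\bigl(\int h\bigr)^r$ for the $k=0$ and $k\ge2$ terms, together with the exact value $L^{-d}$ of $s^{*(n-1)}*g$ on an inner box for the dominant $k=1$ term. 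This yields $\int(f^{*n})^r\asymp n^r\int f^r$ and hence the quantitative rate $N_r(\sum_iX_i)/\sum_iN_r(X_i)\asymp n^{2r/(d(1-r))-1}$, with the same dimensional threshold $d>2r/(1-r)$ appearing for a different reason (making the exponent negative rather than guaranteeing a finite-variance, infinite-entropy density). Your argument is shorter, entirely self-contained, and avoids the local limit theorem machinery; the paper's approach is less computational and produces the R\'enyi entropic CLT results of Section \ref{Renyi-CLT} as independently valuable byproducts. One small point to tighten: the implicit constant in your final $\asymp$ (roughly $(2^{d(1-r)}+2)^{2/(d(1-r))}$, which is bounded in terms of $r$ alone since $d(1-r)>2r$) should be pinned down before $n$ is chosen, so that the requirement reads $C(r)\,n^{-\beta}<\varepsilon$ rather than $n^{-\beta}<\varepsilon$; this is a cosmetic reordering and does not affect correctness.
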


We have an explicit construction of such random vectors. They are essentially truncations of some spherically symmetric random vectors with finite covariance matrices and infinite R\'enyi entropies of order $r\in(0, 1)$. The key point is the convergence along the Central Limit Theorem (henceforth, CLT) for R\'enyi entropies of order $r\in(0, 1)$; that is, the $r$-R\'enyi entropy of 
their normalized sum 
converges to the $r$-R\'enyi entropy of a Gaussian. This implies that, after appropriate normalization, the LHS of \eqref{eq:failure} is finite, but the RHS of \eqref{eq:failure} can be as large as possible. The entropic CLT has been studied for a long time. A celebrated result of Barron \cite{Bar86} shows the convergence in the CLT for Shannon entropy (see \cite{Joh04:book} for a multidimensional setting). The recent work of Bobkov and Marsiglietti \cite{BM18} studies the convergence in the CLT for R\'enyi entropy of order $r>1$ for real-valued random variables (see also \cite{BCG16:2} for convergence in R\'enyi divergence, which is not equivalent to convergence in R\'enyi entropy unless $r=1$). In Section 2, we establish the analogue of \cite[Theorem 1.1]{BM18} in higher dimensions and we prove convergence along the CLT for R\'enyi entropies of order $r\in(0,1)$ for a large class of densities.

As mentioned above, the reverse entropic comparison inequality prevents R\'enyi EPIs of order $r\in(0, 1)$ for generic random vectors. However, a large class of random vectors with the so-called $s$-concave densities do satisfy such a reverse entropic comparison inequality. Our next results show that R\'enyi EPI of order $r\in(0, 1)$ holds for such densities. This extends the earlier work of Marsiglietti and Melbourne \cite{MM18-IEEE,MarsigliettiMelbourne:isit} for log-concave densities (which corresponds to the $s=0$ case). 

Let $s \in [-\infty, \infty]$. A function $f \colon \mathbb{R}^d \to [0,\infty)$ is called $s$-concave if the inequality
\begin{equation}\label{eq:s-concave}
f((1-\lambda)x+\lambda y) \geq ((1-\lambda)f(x)^s+\lambda f(y)^s)^{1/s}
\end{equation}
holds for all $x,y \in \mathbb{R}^d$ such that $f(x)f(y)>0$ and $\lambda \in (0,1)$.
For $s \in \{-\infty,0,\infty\}$, the RHS of \eqref{eq:s-concave} is understood in the limiting sense; that is $\min \{f(x), f(y)\}$ for $s=-\infty$, $f(x)^{1-\lambda}f(y)^\lambda$ for $s=0$, and $\max \{f(x), f(y)\}$ for $s=\infty$.
The case $s=0$ corresponds to log-concave functions.  The study of measures with $s$-concave densities was initiated by Borell in the seminal work \cite{Borell74,Bor75a}. One can think of $s$-concave densities, in particular log-concave densities, as functional versions of convex sets. There has been a recent stream of research on a formal parallel relation between functional inequalities of $s$-concave densities and geometric inequalities of convex sets.

\begin{theorem} \label{thm: s-concave REPI}
For any $s \in (-1/d,0)$ and $r \in (-sd,1)$, there exists $c = c(s,r,d,n)$ such that for all independent random vectors $X_1, \cdots, X_n$ with $s$-concave densities in $\R^d$, we have
$$
N_r(X_1 + \cdots + X_n) \geq c \sum_{i=1}^n N_r(X_i).
$$
In particular, one can take
$$
c = r^{\frac{1}{1-r}} \left( 1 + \frac{1}{n|r'|} \right)^{1+n|r'|} \left(\prod_{k=1}^d \frac{(1+ks)^{|r'|(n-1)} (1+\frac{ks}{r})^{1+|r'|}}{(1+ks(1+\frac{1}{n|r'|}))^{1+n|r'|}} \right)^{\frac{2}{d}},
$$
where $r'=r/(r-1)$ is the H\"older conjugate of $r$.
\end{theorem}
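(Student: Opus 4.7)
The proof follows the Bobkov--Chistyakov template for R\'enyi EPIs with two substitutions dictated by the range $r\in(0,1)$: Young's convolution inequality is replaced by its reverse form (valid for exponents in $(0,1]$), and the classical entropic monotonicity $h_q\leq h_r$ for $q\geq r$ is replaced by a sharp \emph{reverse} comparison of $L^q$-norms, whose validity is precisely what the hypothesis $s\in(-1/d,0)$ guarantees.

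For the first ingredient, I apply the sharp reverse Young convolution inequality (Brascamp--Lieb) with symmetric conjugate exponents $p_1=\cdots=p_n=p$ satisfying $n/p=n-1+1/r$, equivalently $1/p=1+1/(n|r'|)$; note that $|p'|=n|r'|$ and $r<p<1$. This yields
\[
\|f_1*\cdots*f_n\|_r\geq K(r,p)^d\prod_{i=1}^n\|f_i\|_p,
\]
with an explicit Beckner-type constant $K(r,p)$ that, after routine simplification using $1/p-1=1/(n|r'|)$, contributes the $s$- and $d$-independent prefactor $r^{1/(1-r)}(1+1/(n|r'|))^{1+n|r'|}$ of the stated constant.

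For the second ingredient, I invoke Borell's classification of $s$-concave measures: for $s\in(-1/d,0)$ and $s$-concave density $f$, the level-set volume function $F(t)=|\{f\geq t\}|$ has an explicit concavity property (in the variable $t^{s/(1-sd)}$), with extremizers of Beta-type among radial densities. Feeding this into the layer-cake representation $\int f^q\,dx=q\int_0^\infty t^{q-1}F(t)\,dt$ and evaluating the resulting Beta integral on the extremizer produces, for each admissible $q\in(-sd,1]$, a sharp bound on $\int f^q\,dx$ involving a product constant of the form $\prod_{k=1}^d(1+ks)^{a_k}(1+ks/q)^{b_k}$. Taking the ratio at $q=p$ and $q=r$ then yields a sharp reverse $L^p$--$L^r$ comparison for each $f_i$ with explicit constants of the required product form dictated by the extremizer.

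Combining the two ingredients---taking logarithms via $\log\|f\|_q=h_q(X)/|q'|$ and converting to R\'enyi entropy powers via $N_r=e^{2h_r(X)/d}$---and applying the Bobkov--Chistyakov scaling/optimization argument (rescaling each $X_i$ by an optimized factor $\alpha_i>0$ to convert the product/max form that naturally emerges into the desired sum form) yields the claimed inequality. The bookkeeping of exponents, driven by $1/p-1=1/(n|r'|)$ together with $1-q=q/|q'|$ for $q\in\{r,p\}$, produces the precise exponents $|r'|(n-1)$, $1+|r'|$, and $1+n|r'|$ in the product over $k$. The main obstacle is the second ingredient: obtaining the \emph{exact} product constants $\prod_k(1+ks)^{a_k}(1+ks/q)^{b_k}$ (rather than a bound up to dimension-dependent factors) requires Borell's classification combined with an explicit extremizer analysis on the Beta-type densities; once this comparison is in hand, the remaining steps are algebraic, modulo the delicate exponent bookkeeping and the final rescaling to sum form.
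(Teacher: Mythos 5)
Your high-level architecture matches the paper's: a reverse Young convolution inequality supplies the estimate on the sum, a reverse comparison between the $L^p$ and $L^r$ norms of an $s$-concave density replaces the unavailable monotonicity $h_p\geq h_r$, and a Lieb--Rioul linearization/rescaling converts the result into sum form. However, there are two concrete gaps in how you propose to execute the second and third steps.

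First, the reverse norm comparison is not actually established by your second ingredient. What is needed is a bound on the \emph{scale-invariant} quantity $h_p(X)-h_r(X)$, i.e.\ on $\bigl(\int f^p\bigr)^{1/(1-p)}\big/\bigl(\int f^r\bigr)^{1/(1-r)}$, uniform over all $s$-concave densities. Bounding $\int f^p$ and $\int f^r$ separately on extremizers and then ``taking the ratio'' does not yield this: neither integral admits a one-sided bound depending only on $s$ and the normalization $\int f=1$ (under a dilation $x\mapsto tx$ each $\int f^q$ can be driven to $0$ or $\infty$), and a ratio of two separately sharp one-sided bounds is not a valid bound on the ratio unless both are saturated simultaneously and point in compatible directions. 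The paper obtains exactly the needed comparison from Theorem \ref{main} (Fradelizi--Li--Madiman): the function $q\mapsto C(q)\int f^q$ with $C(q)=(q+s)\cdots(q+sd)$ is log-concave on $q>-sd$; applying log-concavity at the three points $r<q<1$, together with $G(1)=C(1)\int f=C(1)$, gives Corollary \ref{rev-holder}, whose constants are precisely the products $\prod_k(1+ks)^{\cdot}(1+ks/q)^{\cdot}$ you are after. This is a genuine theorem with its own nontrivial proof and cannot be replaced by an appeal to Borell's classification plus a layer-cake computation on a candidate extremizer.

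Second, fixing the symmetric exponents $p_1=\cdots=p_n=p$ at the outset only yields the linearized inequality at the single weight vector $\lambda=(1/n,\dots,1/n)$, i.e.\ a lower bound on $N_r(X_1+\cdots+X_n)$ by a \emph{geometric} mean of the $N_r(X_i)$. Since the geometric mean can be far smaller than the arithmetic mean, and since rescaling the individual summands changes the left-hand side, this does not imply the sum form. The equivalence in Theorem \ref{lemma:linearization} requires the linearized inequality for \emph{every} $\lambda$ in the simplex, because the $\lambda$ used in the final substitution depends on the relative sizes of the $N_r(X_i)$. The paper accordingly proves the lower bound $\tfrac d2 A(\lambda)+\sum_k g_k(\lambda)$ for all $\lambda$ and only then observes that, by convexity and permutation invariance of $A$ and the $g_k$, the infimum over the simplex is attained at the symmetric point --- which is why the symmetric exponents appear in the final constant, not because they are the only ones that need to be considered.
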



\begin{theorem}\label{thm: alpha-modif}
Given $s\in(-1/d, 0)$, there exist $0<r_0<1$ and $\alpha= \alpha(s,r,d)$ such that for $r\in(r_0, 1)$ and independent random vectors $X$ and $Y$ in $\R^d$ with $s$-concave densities,
$$
N_r(X+Y)^{\alpha}\geq N_r(X)^{\alpha}+N_r(Y)^{\alpha}.
$$
In particular, one can take
\begin{eqnarray*}
r_0 &=& \left(1-\frac{2}{1+\sqrt{3}}\Big(1+\frac{1}{sd}\Big)\right)^{-1}\\
\alpha &=& \left( 1 + \frac{\log r + (r+1)\log \frac{r+1}{2r} + C(s) }{(1-r) \log 2} \right)^{-1},
\end{eqnarray*}
where
$$
C(s) = \frac{2}{d} \sum_{k=1}^d \left( \log\left(1+\frac{ks}{r}\right) + r\log(1+ks) - (r+1)\log\left(1 + \frac{ks(r+1)}{2r}\right) \right).
$$
\end{theorem}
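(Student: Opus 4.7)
My plan is to adapt the Bobkov--Marsiglietti \cite{BM16} derivation of the $\alpha$-modified REPI (proved there for $r > 1$) to the present regime $r < 1$. The two pillars of their proof -- Young's convolution inequality and the trivial monotonicity $h_p \le h_r$ for $p \ge r$ -- both point the wrong way here, so I replace them respectively by the Brascamp--Lieb--Leindler reverse Young inequality and by a quantitative reverse entropy comparison $h_p \ge h_r - \Delta$ that holds on the class of $s$-concave densities.

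The first step applies the sharp reverse Young inequality: for $p, q \in (r, 1)$ with $\tfrac{1}{p}+\tfrac{1}{q} = 1+\tfrac{1}{r}$,
\begin{equation*}
\|f_X * f_Y\|_r \ge B(p, q, r)^d \|f_X\|_p \|f_Y\|_q
\end{equation*}
with an explicit Beckner-type constant $B$. Taking logarithms and using $\log\|f\|_t = \tfrac{1-t}{t}h_t$ (valid for $t \in (0,1)$) converts this into the one-parameter family
\begin{equation*}
h_r(X+Y) \ge \lambda\, h_p(X) + (1-\lambda)\, h_q(Y) + \tfrac{rd}{1-r}\log B(p, q, r),
\end{equation*}
where $\lambda := \tfrac{r(1-p)}{p(1-r)} \in (0,1)$ and $1-\lambda = \tfrac{r(1-q)}{q(1-r)}$ (the coefficients summing to one precisely because of the Young exponent relation). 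Next, $s$-concavity enters via a Ball-type comparison applied marginal-by-marginal: for an $s$-concave density on $\R^d$ with $s \in (-1/d, 0)$ and any $t > r > -sd$,
\begin{equation*}
h_t(X) \ge h_r(X) - \sum_{k=1}^d \delta_k(s, t, r),
\end{equation*}
where each $\delta_k$ is an explicit logarithmic combination of $1+ks/t$, $1+ks/r$, and $1+ks$. Applying this with $t = p$ and $t = q$ replaces $h_p, h_q$ by $h_r$ and yields, for each admissible $\lambda \in (0, 1)$,
\begin{equation*}
h_r(X+Y) \ge \lambda\, h_r(X) + (1-\lambda)\, h_r(Y) + \Gamma(\lambda, s, r, d).
\end{equation*}

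The final step converts this one-parameter family into the target $\alpha$-EPI via the Legendre-duality identity
\begin{equation*}
\log(u+v) = \sup_{\lambda \in (0,1)}\bigl\{\lambda \log u + (1-\lambda)\log v + H(\lambda)\bigr\}, \qquad H(\lambda) := -\lambda\log\lambda - (1-\lambda)\log(1-\lambda),
\end{equation*}
applied with $u = N_r(X)^\alpha$ and $v = N_r(Y)^\alpha$. This identity shows that $N_r(X+Y)^\alpha \ge N_r(X)^\alpha + N_r(Y)^\alpha$ is equivalent to the family of linear bounds $h_r(X+Y) \ge \lambda h_r(X) + (1-\lambda) h_r(Y) + \tfrac{d}{2\alpha} H(\lambda)$ holding for every $\lambda \in (0,1)$. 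Hence it suffices to choose $\alpha$ so that $\Gamma(\lambda) \ge \tfrac{d}{2\alpha} H(\lambda)$ on $(0,1)$; evaluating at the symmetric point $\lambda = 1/2$ (where $p = q = 2r/(r+1)$ and $H(1/2) = \log 2$) pins down the specific value of $\alpha$ stated in the theorem, with $\log r + (r+1)\log\tfrac{r+1}{2r}$ emerging from the reverse-Young constant at the symmetric point and $C(s)$ collecting the comparison deficit $\sum_k \delta_k(s, 2r/(r+1), r)$.

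The main obstacle is positivity: the $\alpha$ produced above is a meaningful EPI exponent only when $\Gamma(1/2) > 0$, i.e., when the reverse-Young gain $\tfrac{rd}{1-r}\log B$ dominates the comparison deficit. As $r$ decreases, the deficit worsens (in particular the $\delta_k$ blow up as $r \downarrow -sd$) while the Young gain saturates, and the threshold $r_0$ in the statement is precisely the balance point of this competition, its explicit closed form -- with the $\sqrt 3$ -- arising from solving the critical-balance equation. A parallel check that $\lambda = 1/2$ is the binding constraint (so that no sharper $\alpha$ can be extracted from an asymmetric choice of $p, q$) would round out the argument; one expects this by the symmetry of the sharp reverse-Young constant in $(p,q)$ combined with the convexity properties of the Ball-type deficits.
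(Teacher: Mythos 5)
Your overall architecture is the same as the paper's: the reverse Young inequality in entropic form (the paper's Theorem \ref{info-Young}, which for $r<1$ is exactly the Leindler/Brascamp--Lieb reverse form), the reverse entropy comparison $h_p\geq h_r-\sum_k\delta_k$ on $s$-concave densities (the paper's Corollary \ref{rev-holder}, derived from Fradelizi--Li--Madiman), and the Legendre-duality reduction to a one-parameter family of linear inequalities (Rioul's linearization, Theorem \ref{lemma:linearization}; your unscaled formulation with $\frac{d}{2\alpha}H(\lambda)$ is equivalent to the paper's scaled one with $\frac d2(\frac1\alpha-1)H(\lambda)$ after substituting $X\mapsto\sqrt\lambda X$). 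Your bookkeeping at $\lambda=1/2$ (where $p=q=2r/(r+1)$) correctly reproduces both the Young contribution $\log r+(r+1)\log\frac{r+1}{2r}$ and the deficit $C(s)$.

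The genuine gap is the step you defer to a ``parallel check'': that $\lambda=1/2$ is the binding constraint, i.e.\ that $\inf_{\lambda\in(0,1)}\Gamma(\lambda)/H(\lambda)$ is attained at $\lambda=1/2$. This is not a routine verification and it does not follow from ``symmetry of the reverse-Young constant plus convexity of the deficits'': symmetry only makes $\lambda=1/2$ a critical point of $\Gamma/H$, not a global minimizer, and the paper explicitly remarks that the relevant ratio (the quantity in \eqref{eq:obj-opt}) fails to be monotone on $[0,1/2]$ when $r$ is small, so an unconditional symmetry-plus-convexity argument cannot exist. In the paper this step consumes essentially the entire proof: one shows that $-A(\lambda)/H(\lambda)$ and each $-g_k(\lambda)/H(\lambda)$ are increasing on $[0,1/2]$ by applying the second-derivative-ratio criterion of Lemma \ref{lem:calc}, which reduces to showing $-g_k''/H''$ is increasing; this in turn forces the inequality $a^2\geq 3k^2s^2x^2$ at $x=\frac{1}{2|r'|}$, i.e.\ $\frac{1}{|r'|}\leq\frac{2}{1+\sqrt3}\bigl(\frac{1}{k|s|}-1\bigr)$ for all $k\leq d$. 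That condition (at $k=d$) is precisely where $r_0=\bigl(1-\frac{2}{1+\sqrt3}(1+\frac{1}{sd})\bigr)^{-1}$ comes from. So you have also misdiagnosed the origin of $r_0$: it is not the ``balance point'' at which the reverse-Young gain stops dominating the comparison deficit at $\lambda=1/2$ (i.e.\ not a positivity threshold for $\Gamma(1/2)$), but the threshold below which the optimization over $\lambda$ can no longer be localized at the symmetric point. Without supplying this monotonicity argument, the value of $\alpha$ you extract at $\lambda=1/2$ is not known to satisfy the required inequality for asymmetric $\lambda$, and the proof is incomplete.
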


Owing to the convexity, random vectors with $s$-concave densities also satisfy a reverse EPI, which was first proved by Bobkov and Madiman \cite{BM12:jfa}. This can be seen as the functional lifting of Milman's well known reverse Brunn-Minkowski inequality \cite{Mil86}. Motivated by Busemann's theorem \cite{Bus49} in convex geometry, Ball, Nayar and Tkocz \cite{BNT15} conjectured that the following reverse EPI 
\begin{equation}\label{eq:reverse epi}
N(X+Y)^{1/2}\leq N(X)^{1/2}+N(Y)^{1/2}
\end{equation}
holds for any symmetric log-concave random vector $(X, Y)\in\R^2$. The $r$-R\'enyi entropy analogue was asked in \cite{MMX17:1}, and the $r=2$ case was soon verified in \cite{Li17}. It was also observed in \cite{Li17} that the $r$-R\'enyi entropy analogue  is equivalent to the convexity of $p$-cross-section body in convex geometry introduced by Gardner and Giannopoulos \cite{GG99}. The equivalent linearization of \eqref{eq:reverse epi} reads as follows. Let $(X, Y)$ be a symmetric log-concave random vector in $\R^2$ such that $h(X)=h(Y)$. Then for any $\lambda\in[0, 1]$ we have
$$
h((1-\lambda)X+\lambda Y)\leq h(X).
$$
Cover and Zhang \cite{CZ94} proved the above inequality under the stronger assumption that $X$ and $Y$ have the same log-concave distribution. They also showed that this provides a characterization of log-concave distributions on the real line. The following theorem extends Cover and Zhang's result from log-concave densities to a more general class of $s$-concave densities. This gives an entropic characterization of $s$-concave densities and implies a reverse R\'enyi EPI for random vectors with the same $s$-concave density. 

\begin{theorem} \label{thm: generalization of cover-zhang}
Let $r>1-1/d$. Let $f$ be a probability density function on $\mathbb{R}^d$. For any fixed integer $n\geq 2$, the identity 
$$
\sup_{X_i\sim f} h_r\left( \sum_{i=1}^n\lambda_iX_i\right) = h_r(X_1)
$$
holds for all $\lambda_i\geq0$ such that $\sum_{i=1}^n\lambda_i=1$
if and only if the density $f$ is $(r-1)$-concave.
\end{theorem}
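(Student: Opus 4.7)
The plan is to prove both implications separately, noting that $\sup_{X_i\sim f} h_r(\sum_i\lambda_iX_i) \geq h_r(X_1)$ is immediate from the diagonal coupling $X_i \equiv X_1$, which gives $\sum_i\lambda_iX_i = X_1$. The theorem thus reduces to: (i) if $f$ is $(r-1)$-concave then $\sup \leq h_r(X_1)$; and (ii) if this sup equals $h_r(X_1)$ for all admissible $(\lambda_i)$, then $f$ must be $(r-1)$-concave.

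For (i), fix any coupling with marginals $f$, set $Z=\sum_i\lambda_iX_i$, and let $g$ be its density. Assume first that $r\in(1-1/d,1)$. Since $f$ is $(r-1)$-concave and $r-1<0$, the function $f^{r-1}$ is convex; applying Jensen's inequality and then taking expectations over the joint law gives
$$\int g\,f^{r-1}\,dz \;=\; \mathbb{E}\bigl[f(Z)^{r-1}\bigr] \;\leq\; \sum_i\lambda_i\,\mathbb{E}\bigl[f(X_i)^{r-1}\bigr] \;=\; \int f^r,$$
where the last equality uses $X_i\sim f$. Viewing $f^r\,dz/\int f^r$ as a probability measure and $g/f$ as a random variable, Jensen's inequality for the concave map $t\mapsto t^r$ gives
$$\int g^r\,dz \;=\; \int (g/f)^r f^r\,dz \;\leq\; \Big(\int g\,f^{r-1}\,dz\Big)^{r}\Big(\int f^r\,dz\Big)^{1-r}.$$
Combining the two bounds yields $\int g^r \leq \int f^r$, equivalent to $h_r(Z)\leq h_r(X_1)$ since $1/(1-r)>0$. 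For $r>1$, both Jensen inequalities reverse ($f^{r-1}$ becomes concave and $t\mapsto t^r$ convex), producing $\int g^r\geq\int f^r$ and again $h_r(Z)\leq h_r(X_1)$ since $1/(1-r)<0$; the borderline case $r=1$ is the classical Cover--Zhang statement and follows by continuity.

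For (ii), the plan is to specialize the sup condition to the i.i.d.\ coupling with $n=2$ and exploit the full range of $\lambda\in[0,1]$. The assumption then produces a one-parameter family of integral inequalities for $f$, vanishing at $\lambda=0,1$ and having the appropriate sign in between. Following Cover--Zhang, one recasts each inequality as $\mathbb{E}_{X_1,X_2\sim f}[\Psi(X_1,X_2,\lambda)]\geq 0$, where $\Psi$ encodes the defect of convexity of $f^{r-1}$ along the segment $[X_1,X_2]$. A localization argument with marginal-preserving couplings concentrated near a prescribed pair $(x,y)\in\supp(f)^2$ (for instance, narrow two-bump perturbations of the diagonal) then forces the pointwise inequality
$$f((1-\lambda)x+\lambda y)^{r-1} \;\leq\; (1-\lambda)f(x)^{r-1} + \lambda f(y)^{r-1},$$
which is exactly $(r-1)$-concavity of $f$.

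The main obstacle lies in (ii): the sup condition is an integrated statement while $(r-1)$-concavity is pointwise. Constructing couplings with marginals exactly $f$ that isolate an arbitrary pair $(x,y)\in\supp(f)^2$, and verifying that the leading-order behaviour of the entropy deficit under localization recovers convexity without interference from mass-conservation corrections, is the technical core. Sufficiency (i), by contrast, is essentially mechanical once the double-Jensen structure is identified.
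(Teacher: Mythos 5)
Your sufficiency direction (i) is correct and is essentially the paper's own argument: the same two-step scheme of (a) concavity/convexity of $f^{r-1}$ applied to $\sum_i\lambda_iX_i$ and (b) H\"older or reverse H\"older between $f^{r-1}$ and the density $g$ of the sum (your Jensen step for $t\mapsto t^r$ against the measure $f^r\,dz/\int f^r$ is exactly the reverse H\"older inequality), finished by the sign of $\frac{1}{1-r}\log t$. The necessity direction (ii), however, is only a plan, and as stated it has a genuine gap. A density is defined only up to a Lebesgue-null set, so no marginal-preserving coupling can detect the value of $f$ at an individual pair $(x,y)$; your localization can therefore only yield the midpoint (or $\lambda$-) concavity inequality for $f^{r-1}$ for \emph{almost every} pair. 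Concretely, the paper argues that if $f^{r-1}$ is not concave then there is $x_0\neq 0$ for which $\Lambda=\{x: 2f(x)^{r-1}<f(x+x_0)^{r-1}+f(x-x_0)^{r-1}\}$ has positive measure, and only such a positive-measure failure can be exploited. Passing from ``almost every pair'' to genuine $(r-1)$-concavity is not automatic: the paper needs Adamek's theorem (an almost mid-concave function agrees with a mid-concave one off a null set, so $f$ may be modified on a null set without changing the distribution) and then the Blumberg--Sierpi\'nski theorem that a Lebesgue-measurable mid-concave function is concave. Without these regularity inputs your final pointwise inequality does not follow; mid-concavity alone does not imply concavity absent measurability.

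The second missing piece is the perturbation itself, which is the substance of the proof rather than bookkeeping. The paper starts from the diagonal coupling $Y\equiv X$ and moves mass $\delta$ from the diagonal points over $x\pm x_0$, $x\in\Lambda(\epsilon)$, to the off-diagonal points $(x-x_0,x+x_0)$ and $(x+x_0,x-x_0)$; one must check that for small $\epsilon$ the three translates of $\Lambda(\epsilon)$ are disjoint, that both marginals remain equal to $f$, that the density of $\frac{\hat X+\hat Y}{2}$ is $f+\delta(2{\bf 1}_{\Lambda(\epsilon)}-{\bf 1}_{\Lambda(\epsilon)+x_0}-{\bf 1}_{\Lambda(\epsilon)-x_0})$, and that the derivative at $\delta=0$ of $\int\hat f^{\,r}$ is $r\int_{\Lambda(\epsilon)}\bigl[2f(x)^{r-1}-f(x-x_0)^{r-1}-f(x+x_0)^{r-1}\bigr]dx$, whose sign (combined with the sign of $\frac{1}{1-r}$, and with the defining inequality of $\Lambda$ reversed when $r<1$) yields a strict increase of $h_r$ and the desired contradiction. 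Your proposal correctly identifies this as the technical core but does not carry any of it out, so the theorem is not established as written.
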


The paper is organized as follows. In Section \ref{Renyi-CLT}, we explore the convergence along the CLT for $r$-R\'enyi entropies. For $r > 1$, the convergence is fully characterized for densities on $\R^d$, while for $r \in (0,1)$ sufficient conditions are obtained for a large class of densities. More precisely, we prove the convergence for log-concave densities and for compactly supported, spherically symmetric and unimodal densities. As an application, we prove in Section \ref{Renyi-EPI-sec} that a general $r$-R\'enyi EPI fails when $r \in (0,1)$, thus establishing Theorem \ref{thm: REPI failure}. We also complement this result by proving Theorems \ref{thm: s-concave REPI} and \ref{thm: alpha-modif}. In the last section, we provide an entropic characterization of the class of $s$-concave densities, and include a reverse R\'enyi EPI as an immediate consequence.

\section{Convergence along the CLT for R\'enyi entropies}\label{Renyi-CLT}

Let $\{X_n\}_{n\in\N}$ be a sequence of independent identically distributed (henceforth, i.i.d.) centered random vectors in $\R^d$ with finite covariance matrix. We denote by $Z_n$ the normalized sum
\begin{equation}\label{normal}
Z_n = \frac{X_1 + \cdots + X_n}{\sqrt{n}}.
\end{equation}
An important tool used to prove various forms of CLT is the characteristic function. Recall that the characteristic function of a random vector $X$ is defined by
$$
\varphi_X(t) = \EE\big[e^{i \langle t, X \rangle}\big], \quad t \in \R^d.
$$
Before providing sufficient conditions for the convergence along the CLT for R\'enyi entropy of order $r \in (0,1)$, we first extend \cite[Theorem 1.1]{BM18} to higher dimensions.


\begin{theorem}\label{r>1}

Let $r>1$. Let $X_1, \cdots, X_n$ be i.i.d. centered random vectors in $\R^d$. We denote by $\rho_n$ the density of $Z_n$ defined in \eqref{normal}. The following statements are equivalent.
\begin{enumerate}
\item $h_r ( Z_n ) \to h_r(Z)$ as $n \to + \infty$, where $Z$ is a Gaussian random vector with mean $0$ and the same covariance matrix as $X_1$.
\item $h_r(Z_{n_0})$ is finite for some integer $n_0$.
\item $\int_{\R^d} |\varphi_{X_1}(t)|^{\nu} \, dt < + \infty$ for some $\nu \geq 1$.
\item $Z_{n_0}$ has a bounded density $\rho_{n_0}$ for some integer $n_0$.
\end{enumerate}

\end{theorem}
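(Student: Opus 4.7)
The strategy is to close the loop $(1) \Rightarrow (2) \Rightarrow (3) \Rightarrow (4) \Rightarrow (1)$; each implication is a natural $d$-dimensional adaptation of the corresponding step in \cite{BM18}. Throughout we may assume the covariance matrix of $X_1$ is nondegenerate, otherwise the random vectors and the Gaussian target $Z$ live on a strict affine subspace of $\R^d$ on which we run the argument; in particular $h_r(Z)$ is finite for every $r > 1$.

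The implication $(1) \Rightarrow (2)$ is immediate from the finiteness of $h_r(Z)$. For $(2) \Rightarrow (3)$, finiteness of $h_r(Z_{n_0})$ with $r > 1$ forces $\rho_{n_0} \in L^r(\R^d)$. When $r \in (1,2]$, the Hausdorff--Young inequality yields $\hat{\rho}_{n_0} \in L^{r'}$ with $r' = r/(r-1)$; when $r > 2$, the log-convex interpolation between $L^1$ and $L^r$ gives $\rho_{n_0} \in L^2$, and Plancherel yields $\hat{\rho}_{n_0} \in L^2$. In either case, $\hat{\rho}_{n_0} \in L^p$ for some finite $p$; the identity $\hat{\rho}_{n_0}(t) = \varphi_{X_1}(t/\sqrt{n_0})^{n_0}$ and a change of variable give (3) with $\nu = n_0 p$. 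For $(3) \Rightarrow (4)$, pick $n_0 \geq \nu$ and use $|\varphi_{X_1}| \leq 1$ together with a change of variable to bound
\begin{equation*}
\int_{\R^d} |\varphi_{Z_{n_0}}(t)|\,dt = \int_{\R^d} |\varphi_{X_1}(t/\sqrt{n_0})|^{n_0}\,dt \leq n_0^{d/2} \int_{\R^d} |\varphi_{X_1}(t)|^{\nu}\,dt < \infty,
\end{equation*}
so Fourier inversion produces a bounded continuous density for $Z_{n_0}$.

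The nontrivial implication is $(4) \Rightarrow (1)$. Under (4), the short argument $(4)\Rightarrow(2)$ via $\int \rho_{n_0}^r \leq \|\rho_{n_0}\|_\infty^{r-1}$ together with $(2)\Rightarrow(3)$ above shows $\varphi_{X_1} \in L^\nu$ for some $\nu$, which by the same bound yields $\varphi_{Z_n} \in L^1$ for all $n \geq \nu$. A multidimensional local CLT then gives the uniform convergence $\sup_{x \in \R^d} |\rho_n(x) - \rho_Z(x)| \to 0$. This implies the eventual bound $\|\rho_n\|_\infty \leq 2 \|\rho_Z\|_\infty =: M$, and Scheff\'e's lemma upgrades the pointwise convergence to $L^1$ convergence $\|\rho_n - \rho_Z\|_{L^1} \to 0$. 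Applying the elementary pointwise inequality $|a^r - b^r| \leq r \max(a,b)^{r-1} |a-b|$ (valid for $r > 1$, $a,b \geq 0$) with $a = \rho_n$, $b = \rho_Z$ and integrating,
\begin{equation*}
\left| \int_{\R^d} \rho_n^r\,dx - \int_{\R^d} \rho_Z^r\,dx \right| \leq r M^{r-1} \|\rho_n - \rho_Z\|_{L^1} \longrightarrow 0,
\end{equation*}
and since $\int \rho_Z^r > 0$, continuity of $\log$ transfers this to $h_r(Z_n) \to h_r(Z)$.

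The main technical obstacle is verifying the uniform local CLT above in $\R^d$; this is an exercise in dominated convergence once $\varphi_{Z_n}$ is integrable, using $|\varphi_{Z_n}(t)| = |\varphi_{X_1}(t/\sqrt{n})|^{n}$ and the pointwise control $|\varphi_{X_1}(t/\sqrt{n})|^n \leq |\varphi_{X_1}(t/\sqrt{n})|^{n_0}$ (for $n \geq n_0$) to dominate the Fourier inversion integral uniformly in $x$ by an integrable majorant. The remaining implications are direct $d$-dimensional transcriptions of the one-dimensional arguments of \cite{BM18}.
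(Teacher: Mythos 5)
Your proof is correct and follows the same skeleton as the paper's ($1\Rightarrow2\Rightarrow3\Rightarrow4\Rightarrow1$, with the same Hausdorff--Young/Plancherel dichotomy for $2\Rightarrow3$ and the same reliance on a multidimensional local limit theorem as the key external input), but it deviates in two places, both to good effect. For $3\Rightarrow4$ you obtain boundedness of $\rho_{n_0}$ directly from $\varphi_{Z_{n_0}}\in L^1$ via Fourier inversion, whereas the paper extracts it from the uniform convergence $\sup_x|\rho_n-\phi_\Sigma|\to0$ supplied by the local CLT; your route is more elementary and gives an explicit $n_0$ (any integer $\geq\nu$). For $4\Rightarrow1$ you replace the paper's truncation argument (splitting into $|x|\le T$ and $|x|>T$, using tightness plus the bound $\int_{|x|>T}\rho_n^r\le M^{r-1}\int_{|x|>T}\rho_n$ for the tails and dominated convergence on the ball) by Scheff\'e's lemma combined with the pointwise inequality $|a^r-b^r|\le r\max(a,b)^{r-1}|a-b|$; this is cleaner, avoids the tightness step entirely, and yields a quantitative bound $\bigl|\int\rho_n^r-\int\phi_\Sigma^r\bigr|\le rM^{r-1}\|\rho_n-\phi_\Sigma\|_{L^1}$. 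One caveat: your closing remark that the uniform local CLT is "an exercise in dominated convergence" with majorant $|\varphi_{X_1}(t/\sqrt{n})|^{n_0}$ is not right as stated, since that majorant depends on $n$ and its integral grows like $n^{d/2}$, so it does not dominate uniformly; the genuine proof requires the usual three-zone splitting of the inversion integral (small, intermediate, and large frequencies, using nondegeneracy of $\Sigma$ and $\sup_{\delta\le|u|\le A}|\varphi_{X_1}(u)|<1$). Since the paper itself simply cites Gnedenko and Bhattacharya--Ranga Rao for exactly this statement under hypothesis (3), this is a blemish in your parenthetical justification rather than a gap in the proof, provided you cite the standard result rather than claim to reprove it.
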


\begin{proof}
$1 \Longrightarrow 2$:
Assume that $h_r \left( Z_n \right) \to h_r(Z)$ as $n \to + \infty$. Then there exists an integer $n_0$ such that
$$
h_r(Z) - 1 < h_r(Z_{n_0}) < h_r(Z) + 1.
$$
Since $h_r(Z)$ is finite, we conclude that $h_r(Z_{n_0})$ is finite as well.

$2 \Longrightarrow 3$: Assume that $h_r(Z_{n_0})$ is finite for some integer $n_0$. Then $Z_{n_0}$ has a density $\rho_{n_0}\in L^r(\R^d)$.

Case 1: If $r \geq 2$, we have $\rho_{n_0} \in L^2(\R^d)$. Using Plancherel's identity, we have $\varphi_{Z_{n_0}} \in L^2(\R^d)$. It follows that
$$
\int_{\R^d} |\varphi_{Z_{n_0}}(t)|^2 \, dt = \int_{\R^d} |\varphi_{X_1} \left( t/\sqrt{n_0} \right)|^{2n_0} \, dt < + \infty.
$$
For $\nu = 2 n_0$, we have
$$
\int_{\R^d} |\varphi_{X_1}(t)|^{\nu} \, dt < + \infty.
$$

Case 2: If $r \in (1,2)$, we apply the Hausdorff-Young inequality to obtain
$$
\|\varphi_{Z_{n_0}}\|_{L^{r'}} \leq \frac{1}{(2\pi)^{d/r'}} \|\rho_{n_0}\|_{L^r},
$$
where $r'$ is the conjugate of $r$ such that $1/r+1/r'=1$. Hence, for $\nu = r' n_0$, we have
$$
\int_{\R^d} |\varphi_{X_1}(t)|^{\nu} \, dt < + \infty.
$$

$3 \Longrightarrow 4$: Since $\int_{\R^d} |\varphi_{X_1}(t)|^{\nu} \, dt < + \infty$ for some $\nu \geq 1$, one may apply Gnedenko's local limit theorems (see \cite{GK68:book}), which is valid in arbitrary dimensions (see \cite{BRR76}). In particular, we have
\begin{eqnarray}\label{uniform}
\lim_{n \to + \infty} \sup_{x \in \R^d} |\rho_n(x) - \phi_{\Sigma}(x)| = 0,
\end{eqnarray}
where $\phi_{\Sigma}$ denotes the density of a Gaussian random vector with mean $0$ and the same covariance matrix as $X_1$. We deduce that there exists an integer $n_0$ and a constant $M>0$ such that $\rho_n \leq M$ for all $n \geq n_0$.

$4 \Longrightarrow 1$: Since $\rho_{n_0}$ is bounded, then $\rho_{n_0} \in L^2$, and we deduce by Plancherel's identity that $\int_{\R^d} |\varphi_{X_1}(t)|^{\nu} \, dt < + \infty$ for $\nu = 2n_0$. Hence, \eqref{uniform} holds and there exists $M>0$ such that $\rho_n \leq M$ for all $n \geq n_0$. Let us show that $\int_{\R^d} \rho_n(x)^rdx \to \int_{\R^d} \phi_{\Sigma}(x)^rdx$ as $n \to + \infty$, where $\phi_{\Sigma}$ denotes the density of a Gaussian random vector with mean $0$ and the same covariance matrix as $X_1$. By the CLT, for any $\eps>0$, there exists $T>0$ such that for all $n$ large enough,
$$
\int_{|x|>T} \rho_n(x)dx < \eps, 
$$
which implies that
$$
\int_{|x|>T} \rho_n(x)^rdx \leq M^{r-1} \int_{|x|>T} \rho_n(x)dx < M^{r-1} \eps.
$$
The function $\phi_{\Sigma}$ satisfies similar inequalities. Hence, for any $\delta > 0$, there exists $T > 0$ such that for all $n$ large enough,
$$
\left| \int_{|x|>T} \rho_n(x)^rdx - \int_{|x|>T} \phi_{\Sigma}(x)^rdx \right| < \delta.
$$
On the other hand, by \eqref{uniform}, for all $T>0$, the function $\rho_n^r(x) {\bf 1}_{\{|x| \leq T\}}$ converges everywhere to $\phi_{\Sigma}^r(x) {\bf 1}_{\{|x| \leq T\}}$  as $n \to  + \infty$. Since $\rho_n^r(x) {\bf 1}_{\{|x| \leq T\}}$ is dominated by the integrable function $M^r {\bf 1}_{\{|x| \leq T\}}$, one may use the Lebesgue dominated theorem to conclude that
$$
\lim_{n \to + \infty }\left| \int_{|x| \leq T} \rho_n(x)^rdx - \int_{|x| \leq T} \phi_{\Sigma}(x)^rdx \right| = 0.
$$
\end{proof}

\begin{remark}

Theorem \ref{r>1} fails for $r \in (0,1)$. For example, one can consider i.i.d. random vectors  with a bounded density $\rho(x)$  such that $\int_{\R^d} \rho(x)^rdx = + \infty$ (e.g., Cauchy-type distributions). The implication $4 \Longrightarrow 2$ (and thus $4 \Longrightarrow 1$) will not hold since by Jensen inequality $h_r(Z_n) \geq h_r(X_1 / \sqrt{n}) = \infty$ for all $n \geq 1$. As observed by Barron \cite{Bar86}, the implication $1 \Longrightarrow 4$ does not necessarily hold in the Shannon entropy case $r=1$.

\end{remark}



The following result yields a sufficient condition for convergence along the CLT to hold for R\'enyi entropies of order $r \in (0,1)$ for a large class of random vectors in $\R^d$. 

\begin{theorem}

Let $r \in (0,1)$. Let $X_1, \cdots, X_n$ be i.i.d. centered log-concave random vectors in $\R^d$. Then we have $h_r(Z_{n}) < +\infty$ for all $n \geq 1$, and
$$ \lim_{n\to\infty}h_r \left( Z_n \right)= h_r(Z),$$
where $Z_n$ is the normalized sum in \eqref{normal} and $Z$ is a Gaussian random vector with mean $0$ and the same covariance matrix as $X_1$.

\end{theorem}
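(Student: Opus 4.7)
The plan rests on the preservation of log-concavity under convolution and scaling (Pr\'ekopa-Leindler): each density $\rho_n$ of $Z_n$ is log-concave on $\R^d$. A log-concave probability density on $\R^d$ is bounded and has sub-exponential tails, so $\int \rho_n^r\, dx < \infty$ for every $r>0$, which immediately gives the finiteness $h_r(Z_n)<\infty$ for all $n \ge 1$. By an affine change of variables under which both sides of the claimed limit transform identically, I may further assume that $X_1$ is isotropic ($\Sigma = I$), and write $\phi$ for the standard Gaussian density on $\R^d$.

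Next I would establish uniform convergence $\sup_{x\in\R^d} |\rho_n(x)-\phi(x)| \to 0$. Since $\rho_1$ is log-concave it lies in $L^1\cap L^\infty \subset L^2$, so Plancherel gives $\int |\varphi_{X_1}|^2\, dt < \infty$, which is exactly condition $(3)$ in Theorem~\ref{r>1} with $\nu=2$. Gnedenko's local limit theorem then yields the uniform convergence above and in particular $M:=\sup_n \|\rho_n\|_\infty < \infty$. The crucial remaining input is a uniform exponential tail bound: constants $A,a>0$ depending only on the dimension $d$ (via the isotropic normalization) such that
$$
\rho_n(x) \le A\, e^{-a|x|} \quad \text{for all } x\in\R^d \text{ and all } n\ge 1.
$$
Such a bound is classical for a single log-concave density with fixed covariance, following from Borell's lemma together with an upper estimate on the mode in terms of the covariance determinant; since every $Z_n$ has covariance $I$, the constants $A$ and $a$ are uniform in $n$.

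With these ingredients the conclusion is dominated convergence: the function $G(x):=\min\{M^r,\, A^r e^{-ar|x|}\}$ is integrable on $\R^d$ and dominates $\rho_n^r$ uniformly in $n$, while $\rho_n^r \to \phi^r$ pointwise (in fact uniformly) by the local limit theorem. Hence $\int \rho_n^r\, dx \to \int \phi^r\, dx$, and taking $\tfrac{1}{1-r}\log$ of both sides gives $h_r(Z_n) \to h_r(Z)$. The main obstacle is the uniform exponential tail bound: although log-concavity of each $\rho_n$ automatically yields sub-exponential decay, one must check that the decay rate and prefactor can be chosen independently of $n$, using only the fixed covariance and the dimension. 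This is where genuinely quantitative estimates for log-concave measures, rather than the qualitative property of log-concavity alone, enter the argument.
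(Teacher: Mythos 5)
Your argument is correct, but the way you control the tails of $\int\rho_n^r$ differs from the paper's in an interesting way. You invoke a uniform pointwise sub-exponential bound $\rho_n(x)\le A e^{-a|x|}$ with constants depending only on $d$, valid for all isotropic log-concave densities; this is a genuine quantitative input from asymptotic convex geometry (Borell's lemma plus an upper bound on $\|\rho_n\|_\infty$ for isotropic log-concave densities), and since every $Z_n$ is centered with covariance $I$ after your normalization, the constants are indeed uniform in $n$, so your dominated-convergence step goes through. The paper avoids this machinery entirely: from log-concavity applied at the pair of points $0$ and $x$ with weight $r$ it gets the elementary pointwise inequality $\rho_n(x)^r\le \rho_n(rx)/\rho_n(0)^{1-r}$, lower-bounds $\rho_n(0)$ by $\tfrac12\phi_\Sigma(0)$ for large $n$ via the local limit theorem, and then bounds $\int_{|x|>T}\rho_n(rx)\,dx=r^{-d}\,\Prob{|Z_n|>rT}$ by Chebyshev using only $\EE|Z_n|^2=\EE|X_1|^2$. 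This yields $\int_{|x|>T}\rho_n(x)^r\,dx\lesssim T^{-2}$ uniformly in large $n$, which is all that is needed to combine with the dominated convergence on $\{|x|\le T\}$. In short: your route buys an honest integrable majorant of $\rho_n^r$ valid for every $n$ (hence $L^1$ convergence of $\rho_n^r$), at the price of citing nontrivial quantitative estimates for isotropic log-concave measures; the paper's route is more self-contained, needing only the second moment, the local limit theorem, and the defining inequality of log-concavity. One small point to make explicit in your reduction to the isotropic case: the covariance of $X_1$ is automatically nonsingular because $X_1$ has a density (so it is not supported on a hyperplane) and finite second moments, so the affine normalization is legitimate.
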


\begin{proof}
Since log-concavity is preserved under independent sum, $Z_n$ is log-concave for all $n \geq 1$. Hence, for all $n \geq 1$, $Z_n$ has a bounded log-concave density $\rho_n$, which satisfies
$$
\rho_n(x) \leq e^{-a_n|x|+b_n},
$$
for all $x \in \R^d$, and for some constants $a_n > 0$, $b_n \in \R$ possibly depending on the dimension (see, e.g., \cite{BGVV-book-14}). Hence, for all $n \geq 1$, we have
$$
\int_{\R^d} \rho_n(x)^r \, dx \leq \int_{\R^d} e^{-r (a_n|x|+b_n)} \, dx < + \infty.
$$
We deduce that $h_r(Z_{n}) < +\infty$ for all $n \geq 1$.

The boundedness of $\rho_n$ implies that \eqref{uniform} holds, and thus there exists an integer $n_0$ such that for all $n \geq n_0$,
$$
\rho_n(0) > \frac{1}{2} \phi_{\Sigma}(0),
$$
where $\Sigma$ is the covariance matrix of $X_1$ (and thus does not depend on $n$). Moreover, since $\rho_n$ is log-concave, one has for all $x \in \R^d$ that
$$
\rho_n(rx) = \rho_n ((1-r)0+rx) \geq \rho_n(0)^{1-r} \rho_n(x)^r \geq \frac{1}{2^{1-r}} \phi_{\Sigma}(0)^{1-r} \rho_n(x)^r.
$$
Hence, for all $T>0$, we have
\begin{eqnarray*}
\int_{|x|>T} \rho_n(x)^r \, dx & \leq & \frac{2^{1-r}}{\phi_{\Sigma}(0)^{1-r}} \int_{|x|>T} \rho_n(r x) \, dx \\ & = & \frac{2^{1-r}}{r^d \phi_{\Sigma}(0)^{1-r}} \Prob{|Z_n| > rT} \\ & \leq & \frac{1}{T^2} \frac{2^{1-r} \EE[|X_1|^2]}{r^{d+2} \phi_{\Sigma}(0)^{1-r}},
\end{eqnarray*}
where the last inequality follows from Markov's inequality and the fact that
$$
\EE[|Z_n|^2] =\frac{\EE[|X_1|^2] + \cdots + \EE[|X_n|^2]}{n} = \EE[|X_1|^2].
$$
Hence, for every $\eps > 0$, one may choose a positive number $T$ such that for all $n$ large enough,
$$
\int_{|x|>T} \rho_n(x)^rdx < \eps, \qquad \int_{|x|>T} \phi_{\Sigma}(x)^rdx < \eps,
$$
and hence
$$
\left| \int_{|x| > T} \rho_n(x)^rdx - \int_{|x| > T} \phi_{\Sigma}(x)^rdx \right| < \eps.
$$
On the other hand, from \eqref{uniform}, we conclude as in the proof of Theorem \ref{r>1} that for all $T>0$,
$$
\lim_{n \to + \infty }\left| \int_{|x| \leq T} \rho_n(x)^rdx - \int_{|x| \leq T} \phi_{\Sigma}(x)^rdx \right| = 0.
$$
\end{proof}

A function $f \colon \R^d \to \R$ is called unimodal if the super-level sets $\{x\in\R^d: f(x)>t\}$ are convex for all $t\in\R$.  Next, we provide a convergence result for random vectors in $\R^d$ with unimodal densities under additional symmetry assumptions. First, we need the following stability result.

\begin{proposition}\label{stability}
The class of spherically symmetric and unimodal  random variables is stable under convolution.
\end{proposition}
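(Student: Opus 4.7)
The plan is to use the layer-cake formula to express any spherically symmetric unimodal density on $\R^d$ as a mixture of uniform densities on centered balls, thereby reducing the problem to the single case of convolving two such uniforms.

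First I would establish the characterization: a density $f$ on $\R^d$ is spherically symmetric and unimodal if and only if $f(x) = g(|x|)$ for some non-increasing $g \colon [0,\infty) \to [0,\infty)$. The non-trivial direction uses that any rotation-invariant convex set in $\R^d$ is either empty, all of $\R^d$, or a centered ball, so the super-level sets of such an $f$ are centered balls. Writing $u_r$ for the uniform density on $B_r = \{x \in \R^d : |x| \leq r\}$, the layer-cake identity
$$
f(x) = \int_0^\infty \mathbf{1}_{\{f > t\}}(x) \, dt
$$
together with a change of variable sending the level $t$ to the radius of the corresponding super-level ball yields a representation $f = \int_0^\infty u_r \, d\mu(r)$ for a probability measure $\mu$ on $(0,\infty)$ determined by the distribution function of $g$.

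Next, if $X,Y$ are independent with densities $f,h$ corresponding to mixing measures $\mu,\nu$, then by Fubini--Tonelli
$$
f * h = \int_0^\infty \int_0^\infty (u_r * u_s) \, d\mu(r)\, d\nu(s).
$$
It then suffices to check: (i) each convolution $u_r * u_s$ is spherically symmetric and unimodal; and (ii) a mixture of spherically symmetric radially non-increasing densities is again spherically symmetric and radially non-increasing. Statement (ii) is routine: the radial profile of a non-negative integral of non-increasing functions is non-increasing, and rotation invariance is preserved by integration.

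For (i), the spherical symmetry of $u_r * u_s$ is immediate from rotation invariance. Unimodality is where I would invoke the Pr\'ekopa--Leindler inequality: the indicators $\mathbf{1}_{B_r}$ and $\mathbf{1}_{B_s}$ are log-concave as indicators of convex sets, hence so is their convolution $\mathbf{1}_{B_r} * \mathbf{1}_{B_s}$. A log-concave function automatically has convex super-level sets; being also rotation invariant, these super-level sets must be centered balls, which is the desired unimodality. The main technical care lies in setting up the mixture representation cleanly when $g$ is merely non-increasing and not necessarily smooth; this is handled by working with the Lebesgue--Stieltjes measure induced by $-g$ and using Tonelli to justify all interchanges, since every integrand in sight is non-negative.
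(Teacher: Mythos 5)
Your proof is correct and follows essentially the same route as the paper's: both reduce via the layer-cake decomposition to the convolution of indicators of centered balls, establish that this convolution is rotation-invariant and log-concave (hence radially non-increasing), and then integrate. The only cosmetic difference is that you normalize the level-set indicators into uniform densities and phrase the decomposition as a mixture over radii, whereas the paper integrates directly over levels $\lambda$, which avoids the change-of-variables bookkeeping you flag at the end.
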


\begin{proof}
Let $f_1$ and $f_2$ be two spherically symmetric and unimodal densities. By assumption, $f_i$ satisfy that $f_i(Tx) = f_i(x)$ for an orthogonal map $T$ and $|x| \leq |y|$ implies $f_i(x) \geq f_i(y)$.  By the layer cake decomposition, we write
$$
 f_i(x) = \int_0^\infty {\bf 1}_{\{(u,v): f_i(u) > v\}}(x, \lambda) d \lambda.
$$
Apply Fubini's theorem to obtain
\begin{eqnarray}
 f_1 \star f_2(x) &=& \int_{\mathbb{R}^d} f_1(x-y) f_2(y) dy \nonumber  \\
&=& \int_0^\infty \int_0^\infty \left( \int_{\mathbb{R}^d} {\bf 1}_{\{(u,v): f_1(u) > v\}}(x - y, \lambda_1) {\bf 1}_{\{(u,v): f_2(u) > v\}}(y, \lambda_2 )dy  
\right) d \lambda_1 d\lambda_2. \label{eq:integral we want in layer cake decomp}
\end{eqnarray}
Notice that by the spherical symmetry and decreasingness of $f_i$, 
the super-level set
$$
L_{\lambda_i} = \left\{ u : f_i(u) > \lambda_i \right\}
$$
is an origin symmetric ball.  Thus we can write the integrand in \eqref{eq:integral we want in layer cake decomp} as
$$
\int_{\mathbb{R}^d} {\bf 1}_{L_{\lambda_1}}(x - y) {\bf 1}_{L_{\lambda_2}}(y) dy = {\bf 1}_{L_{\lambda_1}} \star {\bf 1}_{L_{\lambda_2}}(x).
$$
This quantity is clearly dependent only on $|x|$, giving spherical symmetry.  In addition, as the convolution of two log-concave functions, 
${\bf 1}_{L_{\lambda_1}} \star {\bf 1}_{L_{\lambda_2}}$ is log-concave as well.  It follows that for every $\lambda_1, \lambda_2$, and $|x| \leq |y|$ we have 
$$
{\bf 1}_{L_{\lambda_1}} \star {\bf 1}_{L_{\lambda_2}}(x) \geq {\bf 1}_{L_{\lambda_1}} \star {\bf 1}_{L_{\lambda_2}}(y).
$$  
Integrating this inequality completes the proof.
\end{proof}

Let us establish large deviation and pointwise inequalities for compactly supported, spherically symmetric and unimodal densities.

\begin{theorem}[Hoeffding \cite{Hoe63}] \label{thm: Hoeffding}
Let $X_1, \cdots, X_n$ be independent random variables with mean 0 and bounded in $(a_i, b_i)$, respectively. One has for all $T>0$,
$$ 
\mathbb{P} \left( \sum_{i=1}^n X_i > T \right) \leq \exp \left(- \frac{ 2 T^2}{ \sum_{i=1}^n(b_i- a_i)^2 } \right).
$$
\end{theorem}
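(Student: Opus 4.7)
The plan is to prove this classical Hoeffding inequality by the standard Chernoff bounding method. First, for any $s>0$, Markov's inequality applied to $e^{s\sum_i X_i}$ yields
\[
\mathbb{P}\left(\sum_{i=1}^n X_i > T\right) \leq e^{-sT}\,\mathbb{E}\bigl[e^{s\sum_i X_i}\bigr] = e^{-sT}\prod_{i=1}^n \mathbb{E}[e^{sX_i}],
\]
where the second equality uses independence. It thus suffices to control each moment generating function $\mathbb{E}[e^{sX_i}]$ uniformly in terms of the range $b_i-a_i$, and then to optimize over $s$.

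The key auxiliary step is Hoeffding's lemma: if $X$ is centered with $X \in [a,b]$ almost surely, then $\mathbb{E}[e^{sX}] \leq \exp(s^2(b-a)^2/8)$. I would prove this by the convexity of $t\mapsto e^{st}$, writing for $x\in[a,b]$
\[
e^{sx} \leq \frac{b-x}{b-a}\,e^{sa} + \frac{x-a}{b-a}\,e^{sb}.
\]
Taking expectation and using $\mathbb{E}[X]=0$ gives $\mathbb{E}[e^{sX}] \leq \frac{b}{b-a}e^{sa} - \frac{a}{b-a}e^{sb}$. Setting $u=s(b-a)$ and letting $\psi(u)$ denote the logarithm of this expression, a direct calculation shows $\psi(0)=\psi'(0)=0$ and $\psi''(u)=p(1-p)\leq 1/4$ for a certain $p\in[0,1]$ depending on $u$. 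Taylor's theorem with remainder then yields $\psi(u)\leq u^2/8$, which is the claimed bound.

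Inserting Hoeffding's lemma into the Chernoff estimate gives
\[
\mathbb{P}\left(\sum_{i=1}^n X_i > T\right) \leq \exp\left(-sT + \frac{s^2}{8}\sum_{i=1}^n (b_i-a_i)^2\right),
\]
and minimizing the quadratic in $s$ at $s^\ast = 4T/\sum_i(b_i-a_i)^2$ produces exactly $\exp\bigl(-2T^2/\sum_i(b_i-a_i)^2\bigr)$. The only non-routine obstacle in this blueprint is the moment generating function estimate of Hoeffding's lemma; once that is in hand, the exponential Markov step, the independence factorization, and the one-dimensional optimization in $s$ are all mechanical.
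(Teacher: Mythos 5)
Your proof is correct: it is the standard Chernoff--Hoeffding argument (exponential Markov bound, independence factorization, Hoeffding's lemma $\mathbb{E}[e^{sX}]\leq e^{s^2(b-a)^2/8}$ via convexity and a second-derivative bound of the form $q(1-q)\leq 1/4$, then optimization at $s^\ast = 4T/\sum_i(b_i-a_i)^2$), and all the computations check out. The paper itself gives no proof of this statement --- it is quoted as a classical result from Hoeffding's 1963 paper --- and your argument is essentially the original one, so there is nothing to compare beyond noting that your route is the canonical one.
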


The following result is Hoeffding's inequality in higher dimensions.

\begin{lemma} \label{lem: Clumsy Mulitdimensional Hoeffding}
Let $X_1, \cdots, X_n$ be centered independent random vectors in $\R^d$ satisfying $\mathbb{P}( | X_i | > R) = 0$ for some $R>0$. One has for all $T>0$ that
$$
\mathbb{P} \left( \left|\frac{X_1 + \cdots + X_n}{\sqrt{n}} \right | > T \right) \leq 2 d \exp \left( - \frac{T^2}{2d^2 R^2} \right).
$$
\end{lemma}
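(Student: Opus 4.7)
The plan is to reduce the multidimensional statement to $d$ applications of the one-dimensional Hoeffding bound (Theorem \ref{thm: Hoeffding}), at the cost of a union bound over coordinates. This is why the authors advertise the lemma as ``clumsy'': the reduction is loose but wonderfully elementary, and it produces exactly the factor $d^2$ in the exponent rather than the sharper $d$ one would get with a Euclidean-to-sup-norm comparison.

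First I would write each summand in coordinates as $X_i = (X_i^{(1)},\dots,X_i^{(d)})$. Since $|X_i^{(j)}|\leq |X_i|\leq R$ almost surely, each $X_i^{(j)}$ takes values in $[-R,R]$, and since $\mathbb{E}[X_i]=0$ we have $\mathbb{E}[X_i^{(j)}]=0$. Next I would apply the crude triangle inequality in the form $|v|\leq \sum_{j=1}^d |v^{(j)}|$ valid for any $v\in\mathbb{R}^d$, to obtain the inclusion of events
$$
\left\{\Big|\tfrac{1}{\sqrt{n}}\textstyle\sum_{i=1}^n X_i\Big|>T\right\}\subseteq \bigcup_{j=1}^d \left\{\Big|\tfrac{1}{\sqrt{n}}\textstyle\sum_{i=1}^n X_i^{(j)}\Big|>\tfrac{T}{d}\right\}.
$$
A union bound then reduces the problem to estimating $\mathbb{P}(|\sum_{i=1}^n X_i^{(j)}|>T\sqrt{n}/d)$ for each $j$.

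For each $j$ I would apply Theorem \ref{thm: Hoeffding} twice (to $\sum_i X_i^{(j)}$ and $-\sum_i X_i^{(j)}$), using $(b_i-a_i)^2=4R^2$ for every $i$, so that $\sum_{i=1}^n (b_i-a_i)^2 = 4nR^2$. This yields
$$
\mathbb{P}\left(\Big|\textstyle\sum_{i=1}^n X_i^{(j)}\Big|>\tfrac{T\sqrt{n}}{d}\right)\leq 2\exp\!\left(-\frac{2(T\sqrt{n}/d)^2}{4nR^2}\right)=2\exp\!\left(-\frac{T^2}{2d^2R^2}\right).
$$
Summing over $j=1,\dots,d$ finishes the proof with the claimed constant $2d$ in front.

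There is no real obstacle here: the only genuine choice is the coordinate decomposition trick, and the use of $\ell^1\leq d\,\ell^\infty$ (rather than $\ell^2\leq \sqrt{d}\,\ell^\infty$) is what produces the $d^2$ in the exponent that the statement advertises. A sharper version with $d$ in place of $d^2$ is available via the Euclidean comparison, but is not needed for the later applications in the paper.
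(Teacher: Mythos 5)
Your proof is correct and follows essentially the same route as the paper's: a coordinatewise pigeon-hole reduction (via $|v|\leq\sum_j|v^{(j)}|$), a union bound over the $d$ coordinates, and a two-sided application of Theorem \ref{thm: Hoeffding} with $\sum_i(b_i-a_i)^2=4nR^2$. The constants work out exactly as you computed, so there is nothing to add.
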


\begin{proof}
Let $X_{i, j}$ be the $j$-th coordinate of the random vector $X_i$. Then we have
\begin{eqnarray}
\mathbb{P} \left( \left|\frac{X_1 + \cdots + X_n}{\sqrt{n}} \right | > T \right)
            &\leq& \label{eq: pigeon hole}
                \mathbb{P} \left( \bigcup_{j=1}^d \left\{|X_{1, j} + \cdots + X_{n, j}| > \frac{T \sqrt{n}}{d} \right\} \right)
                    \\
            &\leq& \label{eq: union bound}
                \sum_{j=1}^d \mathbb{P} \left( |X_{1, j} + \cdots + X_{n, j}| > \frac{T \sqrt{n}}{d} \right)
                    \\
            &\leq& \label{eq: application of Hoeff}
                2 d \exp \left( - \frac{T^2}{2d^2 R^2} \right),
    \end{eqnarray}
  where inequality \eqref{eq: pigeon hole} follows from the pigeon-hole principle, \eqref{eq: union bound} from a union bound, and \eqref{eq: application of Hoeff} follows from applying Theorem \ref{thm: Hoeffding} to $X_{1, j} + \cdots + X_{n, j}$ and $(-X_{1, j}) + \cdots + (-X_{n, j})$.
\end{proof}

We deduce the following pointwise estimate for unimodal spherically symmetric and bounded random variables.

\begin{corollary} \label{cor: Hoeffding decay away from zero}
Let $X_1, \cdots, X_n$ be i.i.d. random vectors with spherically symmetric unimodal density supported on the Euclidean ball $B_R=\{x: |x| \leq R\}$ for some $R >0$. Let $\rho_n$ denote the density of the normalized sum $Z_n$. Then there exists $c_d>0$ such that for all $n \geq 1$ and $|x| > 2$,
$$
\rho_n(x) \leq c_d \exp \left( - \frac{(|x| - 1)^2}{2d^2 R^2} \right).
$$
\end{corollary}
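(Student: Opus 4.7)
The strategy is to convert a pointwise bound on the density into a probability tail bound, using the fact that $\rho_n$ is radially decreasing together with the multidimensional Hoeffding bound already established in Lemma~\ref{lem: Clumsy Mulitdimensional Hoeffding}.

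First I would invoke Proposition~\ref{stability}: since convolution preserves the class of spherically symmetric unimodal densities, $\rho_n$ is itself spherically symmetric and unimodal. In particular $\rho_n$ is radially nonincreasing, so for every $y$ with $|y| \leq |x|$ we have $\rho_n(y) \geq \rho_n(x)$.

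The core geometric step is to consider, for fixed $x$ with $|x| > 2$, the spherical shell
$$
A_x = \{ y \in \mathbb{R}^d : |x|-1 \leq |y| \leq |x|\}.
$$
By radial monotonicity, $\rho_n \geq \rho_n(x)$ on $A_x$, which gives
$$
\rho_n(x) \, \volume{d}{A_x} \leq \int_{A_x} \rho_n(y)\,dy \leq \mathbb{P}(|Z_n| > |x|-1).
$$
The volume of $A_x$ equals $\omega_d(|x|^d - (|x|-1)^d)$, where $\omega_d$ is the volume of the Euclidean unit ball. The mean value theorem applied to $t \mapsto t^d$ gives
$$
|x|^d - (|x|-1)^d \geq d(|x|-1)^{d-1} \geq d
$$
for $|x| > 2$, so $\volume{d}{A_x} \geq d\,\omega_d$.

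Finally I would apply Lemma~\ref{lem: Clumsy Mulitdimensional Hoeffding} with $T = |x|-1$ (noting that spherical symmetry forces the $X_i$ to be centered and the support condition $X_i \in B_R$ matches the hypothesis of the lemma) to obtain
$$
\mathbb{P}(|Z_n| > |x|-1) \leq 2d \exp\left(-\frac{(|x|-1)^2}{2d^2 R^2}\right),
$$
and combining the two displays yields the claimed inequality with $c_d = 2/\omega_d$. The only conceptual step is recognizing that unimodality and spherical symmetry of $\rho_n$ (guaranteed by Proposition~\ref{stability}) are exactly what convert a radially averaged tail estimate back into a pointwise upper bound; the rest is bookkeeping on the shell volume and the direct application of Hoeffding.
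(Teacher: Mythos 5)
Your proposal is correct and follows essentially the same route as the paper: stability of spherically symmetric unimodal densities under convolution, the radial-monotonicity argument over the shell $\{|x|-1 \leq |y| \leq |x|\}$, and Lemma \ref{lem: Clumsy Mulitdimensional Hoeffding} with $T = |x|-1$. The only (immaterial) difference is the lower bound on the shell volume — you use the mean value theorem to get $d\,\omega_d$, while the paper uses monotonicity of $t \mapsto t^d - (t-1)^d$ to get $(2^d-1)\omega_d$ — which just changes the explicit value of $c_d$.
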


\begin{proof}
 Stating Lemma \ref{lem: Clumsy Mulitdimensional Hoeffding} in terms of $\rho_n$, we have
    \begin{eqnarray} \label{eq: Clumsy multidim hoeff in terms of density}
        \int_{|w| > T} \rho_n(w) dw 
            &\leq &
                2d \exp \left( - \frac{ T^2}{2d^2 R^2} \right).
    \end{eqnarray}
    Since the class of spherically symmetric unimodal random variables is stable under independent summation by Proposition \ref{stability}, $\rho_n$ is spherically symmetric and unimodal, so that
    \begin{eqnarray}
        \rho_n(x) 
            &\leq& \nonumber 
                \frac{\int_{B_{|x|} \backslash B_{|x|-1}} \rho_n(w) dw }{\Vol(B_{|x|} \backslash B_{|x|-1})  } 
                    \\
            &\leq& \label{eq: volume bound and tail thing}
                \frac{ \int_{|w| \geq |x| - 1} \rho_n(w) dw}{(2^d-1) \omega_d}
    \end{eqnarray}
    where $B_{|x|}$ represents the Euclidean ball of radius $|x|$ centered at the origin and $\omega_d$ is the volume of the unit ball.  Note that 
$$
\Vol(B_{|x|} \backslash B_{|x|-1}) = (|x|^d - (|x| - 1)^d) \omega_d \geq (2^d - 1) \omega_d,
$$
    since $t \mapsto t^d - (t-1)^d$ is increasing, so that \eqref{eq: volume bound and tail thing} follows.  Now applying \eqref{eq: Clumsy multidim hoeff in terms of density} we have
    \begin{eqnarray*}
        \rho_n(x) 
            &\leq&
                \frac{ \int_{|w| \geq |x| - 1} \rho_n(w) dw}{(2^d-1) \omega_d}
                    \\
            &\leq&
               \frac{ 2d}{(2^d -1)\omega_d} \exp \left( - \frac{(|x|-1)^2}{2d^2 R^2} \right)
    \end{eqnarray*}
    and our result holds with 
$$
c_d = \frac{ 2d}{(2^d -1)\omega_d}.
$$
\end{proof}

We are now ready to establish a convergence result for bounded spherically symmetric unimodal random vectors.

\begin{theorem}\label{CLT-radial-unim}
Let $r \in (0,1)$. Let $X_1, \cdots, X_n$ be i.i.d. random vectors in $\R^d$ with a spherically symmetric unimodal density with compact support. Then we have
$$
\lim_{n \to \infty} h_r (Z_n) = h_r(Z),
$$
where $Z_n$ is the normalized sum in \eqref{normal} and $Z$ is a Gaussian random vector with mean $0$ and the same covariance matrix as $X_1$.
\end{theorem}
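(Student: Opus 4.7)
The plan is to mirror the structure of the log-concave CLT proof. By Proposition \ref{stability}, spherical symmetry and unimodality are preserved under convolution, so each density $\rho_n$ of the normalized sum $Z_n$ is itself spherically symmetric and unimodal. I then split
$$
\int_{\R^d} \rho_n(x)^r\, dx = \int_{|x|\leq T} \rho_n(x)^r\, dx + \int_{|x| > T} \rho_n(x)^r\, dx
$$
and estimate each piece. The tail will be controlled uniformly in $n$ via Corollary \ref{cor: Hoeffding decay away from zero}, while the bulk will be handled by a uniform local limit theorem; the corresponding tail for $\phi_\Sigma$ is immediate from Gaussian decay.

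For the tail estimate, Corollary \ref{cor: Hoeffding decay away from zero} gives, for all $n \geq 1$ and $|x| > 2$, the pointwise bound $\rho_n(x) \leq c_d \exp\bigl(-(|x|-1)^2/(2d^2 R^2)\bigr)$. Since $r \in (0,1)$ and the bound retains Gaussian decay after raising to the $r$-th power, for any $\varepsilon > 0$ one can pick $T$ (depending on $\varepsilon$ but not on $n$) so that $\int_{|x|>T}\rho_n(x)^r\, dx < \varepsilon$, and $\phi_\Sigma$ satisfies the analogous estimate with room to spare.

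For the bulk, I would invoke Gnedenko's local limit theorem to conclude $\sup_{x \in \R^d}|\rho_n(x)-\phi_\Sigma(x)| \to 0$. Triggering it requires $\int_{\R^d}|\varphi_{X_1}(t)|^\nu \, dt < \infty$ for some $\nu \geq 1$ (condition 3 of Theorem \ref{r>1}), and this in turn follows once some convolution iterate $\rho_k$ is bounded: being then also compactly supported, $\rho_k \in L^2$, so Plancherel yields $\varphi_{Z_k} \in L^2$ and hence $\int|\varphi_{X_1}|^{2k}\, dt < \infty$ after a change of variables. Once uniform convergence is in hand, $\rho_n^r \to \phi_\Sigma^r$ uniformly on $\{|x|\leq T\}$, so the bulk integrals converge, and combining with the tail estimate finishes the proof.

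The main obstacle is verifying that some $\rho_k$ is bounded, since a spherically symmetric unimodal density with compact support need not itself be bounded at the origin. This can be handled by iterating convolutions and exploiting the radial monotonicity supplied by Proposition \ref{stability} together with the averaging argument underlying Corollary \ref{cor: Hoeffding decay away from zero}: away from the origin $\rho_n$ is uniformly bounded by its volume-averaged tail, and sufficiently many convolutions smooth out any singularity at the origin, yielding the required boundedness of $\rho_k$ for some $k$ and therefore the characteristic function condition.
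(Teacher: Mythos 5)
Your decomposition and both estimates are exactly the paper's: the tail of $\int\rho_n^r$ is controlled uniformly in $n$ by Corollary \ref{cor: Hoeffding decay away from zero} (and Gaussian decay for $\phi_\Sigma$), and the bulk is handled by the uniform local limit theorem \eqref{uniform} plus dominated convergence on $\{|x|\le T\}$. Up to that point the argument is correct and is the intended one.

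The genuine gap is in your last paragraph. To trigger \eqref{uniform} you need $\int|\varphi_{X_1}|^{\nu}\,dt<\infty$ for some $\nu$, equivalently (by Theorem \ref{r>1}) that some convolution iterate $\rho_k$ is bounded, and you correctly observe that the hypotheses do not give this for $\rho_1$: a spherically symmetric unimodal compactly supported density may blow up at the origin. But your proposed repair --- ``sufficiently many convolutions smooth out any singularity at the origin'' --- is asserted, not proved, and it is false in the stated generality. Since $f$ is symmetric, $f^{\star 2k}(0)=\int\bigl(f^{\star k}(y)\bigr)^2\,dy=(2\pi)^{-d}\int|\hat f(t)|^{2k}\,dt$ by Plancherel, so if $\hat f$ lies in no $L^{\nu}$ then no convolution power of $f$ is bounded. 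Such $f$ exist within your class: take $f=\sum_j c_j\,{\bf 1}_{B_{\rho_j}}/|B_{\rho_j}|$ with radii $\rho_j\to 0$ extremely fast and weights $c_j\to 0$ slowly; this is radially decreasing with compact support, yet $|\hat f(t)|\gtrsim\sum_{j:\rho_j|t|\le 1}c_j$ on annuli $|t|\sim 1/\rho_J$ of volume $\rho_J^{-d}$, which forces $\int|\hat f|^{\nu}=\infty$ for every $\nu$. For such a density the uniform convergence $\sup_x|\rho_n-\phi_\Sigma|\to 0$ cannot hold (every $\rho_n$ is unbounded while $\phi_\Sigma$ is bounded), so the bulk step of this proof collapses. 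The honest fixes are either to add boundedness of the density to the hypotheses, or to argue around the origin separately (e.g.\ via the uniform bound $\rho_n(x)\le|B_{|x|}|^{-1}$ away from $0$ together with H\"older's inequality showing $\int_{|x|<\delta}\rho_n^r$ is small for $r<1$), which requires a local limit theorem valid away from the singularity and is not supplied here. For what it is worth, the paper's own proof has the same soft spot --- it simply asserts ``since $\rho_1$ is bounded'' --- but in its only application (the proof of Theorem \ref{negative-REPI}) the density $f_{R,p,d^*}$ is bounded, so the issue does not arise there; your write-up, by contrast, explicitly stakes the proof on a smoothing claim that does not hold.
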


\begin{proof}
Let us denote by $\rho_n$ the density of $Z_n$. Since $\rho_1$ is bounded, one may apply \eqref{uniform} together with Lebesgue dominated convergence to conclude that for all $T>0$,
$$
\lim_{n \to + \infty }\left| \int_{|x| \leq T} \rho_n(x)^rdx - \int_{|x| \leq T} \phi_{\Sigma}(x)^rdx \right| = 0.
$$
On the other hand, by Corollary \ref{cor: Hoeffding decay away from zero}, one may choose $T>0$ such that for all $n \geq 1$,
$$
\int_{|x|>T} \rho_n(x)^rdx < \eps, \qquad \int_{|x|>T} \phi_{\Sigma}(x)^rdx < \eps,
$$
and hence
$$
\left| \int_{|x| > T} \rho_n(x)^rdx - \int_{|x| > T} \phi_{\Sigma}(x)^rdx \right| < \eps.
$$
\end{proof}

\section{R\'enyi EPIs of order $r\in(0,1)$}\label{Renyi-EPI-sec}

A striking difference between R\'enyi EPIs of orders $r \in (0,1)$ and $r \geq 1$ is the lack of an absolute constant. Indeed, it was shown in \cite{BC15:1} that for $r \geq 1$ R\'enyi EPI of the form \eqref{eq:BC-REPI} holds for generic independent random vectors with an absolute constant $c \geq \frac{1}{e}r^{\frac 1 {r-1}}$. In the following subsection, we show that such a R\'enyi EPI does not hold for $r\in(0, 1)$. 

\subsection{Failure of a generic R\'enyi EPI}

\begin{definition}
For $r \in [0,\infty]$, we define $c_r$ as the largest number such that for all $n, d\geq 1$ and  any  independent random vectors $X_1, \cdots, X_n$ in $\R^d$, we have
\begin{equation}\label{eq:cr-def}
  N_r(X_1 +\cdots + X_n) \geq c_r \sum_{i=1}^n N_r(X_i).
\end{equation}

\end{definition}

Then we can rephrase Theorem \ref{thm: REPI failure} as follows.

\begin{theorem}\label{negative-REPI}
For $r \in (0,1)$, the constant $c_r$ defined in \eqref{eq:cr-def} satisfies $c_r = 0$.
\end{theorem}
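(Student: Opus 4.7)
The plan is to exhibit, for each $\varepsilon>0$, i.i.d.\ random vectors $X_1,\dots,X_n$ in some $\R^d$ whose common density is compactly supported, spherically symmetric and unimodal (so that the R\'enyi CLT of Theorem \ref{CLT-radial-unim} applies) yet whose $r$-R\'enyi entropy is arbitrarily large. Two observations combine. First, for i.i.d.\ summands the scaling of R\'enyi entropy gives the identity $N_r(X_1+\cdots+X_n)=n\,N_r(Z_n)$, with $Z_n$ as in \eqref{normal}. Second, Theorem \ref{CLT-radial-unim} guarantees $N_r(Z_n)\to N_r(Z)$ with $Z$ Gaussian of the same covariance; since the R\'enyi entropy of a Gaussian is an explicit function of its covariance, holding the covariance bounded makes $N_r(Z_n)$ uniformly bounded for $n$ large. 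Consequently the left-hand side of \eqref{eq:failure} grows only linearly in $n$ with a controlled constant, while the right-hand side $n\,N_r(X_1)$ can be driven to $+\infty$ by enlarging $N_r(X_1)$ alone.

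To construct densities realizing this disparity, fix $r\in(0,1)$ and choose the dimension $d$ so that $d/(d+2)>r$, equivalently $d>2r/(1-r)$. Pick any exponent $p$ with $(d+2)/2<p\leq d/(2r)$, a nonempty interval by the choice of $d$. On $\R^d$ consider the spherically symmetric, unimodal, Cauchy-type density
$$
 f(x)=\frac{c_{d,p}}{(1+|x|^2)^{p}}.
$$
A polar-coordinate computation shows $\int|x|^2 f(x)\,dx<\infty$ (because $p>(d+2)/2$) while $\int f(x)^r\,dx=+\infty$ (because $rp\leq d/2$). For $R>0$ let $f_R$ be the probability density obtained by restricting $f$ to the Euclidean ball $B_R$ and renormalizing. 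Then $f_R$ is compactly supported, spherically symmetric and unimodal; its covariance matrix is uniformly bounded in $R$ by that of $f$; and monotone convergence gives $\int f_R^r\to\int f^r=+\infty$, so $N_r(f_R)\to+\infty$ as $R\to+\infty$.

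Putting it together: let $X^{(R)}$ have density $f_R$ and let $Z^{(R)}$ be Gaussian with the same covariance. The Gaussian formula bounds $N_r(Z^{(R)})\leq C$ for some $C=C(r,d)$ independent of $R$. Given $\varepsilon>0$, choose $R$ so large that $N_r(f_R)>2C/\varepsilon$, and then apply Theorem \ref{CLT-radial-unim} to i.i.d.\ copies $X_1^{(R)},\dots,X_n^{(R)}$ to find an $n$ with $N_r(Z_n^{(R)})\leq 2N_r(Z^{(R)})\leq 2C$. Since $N_r(X_1^{(R)}+\cdots+X_n^{(R)})=n\,N_r(Z_n^{(R)})$ and $\sum_i N_r(X_i^{(R)})=n\,N_r(f_R)$,
$$
 \frac{N_r(X_1^{(R)}+\cdots+X_n^{(R)})}{\sum_{i=1}^n N_r(X_i^{(R)})}=\frac{N_r(Z_n^{(R)})}{N_r(f_R)}\leq\frac{2C}{N_r(f_R)}<\varepsilon,
$$
which forces $c_r\leq\varepsilon$; as $\varepsilon>0$ was arbitrary and $c_r\geq 0$, we conclude $c_r=0$.

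The only nontrivial step is the construction of the heavy-tailed density. On the real line, Cauchy-type kernels only reconcile a finite variance with $\int f^r=+\infty$ when $r<1/3$, which would leave most of $(0,1)$ uncovered. The way around this is the freedom to choose the dimension: for the family $(1+|x|^2)^{-p}$ the ``variance versus $r$-integrability'' threshold is $d/(d+2)$, which approaches $1$ as $d\to\infty$, so the construction covers every $r\in(0,1)$ once the dimension is taken large enough.
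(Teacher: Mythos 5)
Your proposal is correct and follows essentially the same route as the paper: truncate a heavy-tailed, spherically symmetric, unimodal density with finite covariance but infinite $r$-R\'enyi entropy (the paper uses $C_R(1+|x|)^{-p}\mathbf{1}_{B_R}$ with $p\in(d^*+2,d^*/r]$, you use the equivalent $(1+|x|^2)^{-p}$ normalization), invoke Theorem \ref{CLT-radial-unim} to bound $N_r(Z_n)$ by a constant depending only on the covariance, and let the truncation radius blow up $N_r(X_1)$. The dimension choice $d>2r/(1-r)$ and the quantifier ordering (first $R$, then $n$) match the paper's argument, so there is nothing to add.
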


The motivating observation for this line of argument is the fact that for $r \in (0,1)$, there exist distributions with finite covariance matrices and infinite $r$-R\'enyi entropies.  One might anticipate that this could contradict the existence of an  $r$-R\'enyi EPI, as the CLT forces the normalized sum of i.i.d. random vectors $X_1, \cdots, X_n$ drawn from such a distribution to become ``more Gaussian''.  Heuristically, one anticipates that $N_r(X_1+\cdots+X_n)/n = N_r(Z_n)$ should approach $N_r(Z)$ for large $n$, where $Z_n$ is the normalized sum in \eqref{normal} and $Z$ is a Gaussian vector with the same covariance matrix as $X_1$, while $\sum_{i=1}^n N_r(X_i) /n = N_r(X_1)$ is infinite.

\begin{proof}[Proof of Theorem \ref{negative-REPI}]
Let us consider the following density 
$$
 f_{R,p,d}(x) =  C_R (1+|x|)^{-p}{\bf 1}_{B_R}(x) \quad x \in \R^d,
$$
with $p,R >0$ and $C_R$ implicitly determined to make $ f_{R,p,d}$ a density. Since the density is spherically symmetric, its covariance matrix can be rewritten as $\sigma_R^2 I$ for some $\sigma_R>0$, where $I$ is the identity matrix.  Computing in spherical coordinates one can check that $\lim_{R \to \infty} C_R$ is finite for $p > d$, and we can thus define a density $f_{\infty,p,d}$. What is more, when $p > d+2$, the limiting density $f_{\infty, p,d}$ has a finite covariance matrix, and has finite R\'enyi entropy if and only if $p > d/r$.
    
For fixed $r \in (0,1)$, we take $p \in (d^*+2, d^*/r]$, where $d^*=\min\{d\in\N: d > 2r/(1-r) \}$ guarantees the existence of such $p$.  In this case, the limiting density $f_{\infty, p, d^*}$ is well defined and it has finite covariance matrix $\sigma_\infty^2 I$, but the corresponding $r$-R\'enyi entropy is infinite. Now we select independent random vectors $X_1, \cdots, X_n$ from the distribution $f_{R,p,d^*}$. Since $f_{R,p,d^*}$ is a spherically symmetric and unimodal density with compact support, we can apply Theorem \ref{CLT-radial-unim} to conclude that
$$
\lim_{n\to\infty} N_r(Z_n) = \sigma_R^2 N_r(Z_{Id}),
$$
where  $Z_n$ is the normalized sum in \eqref{normal} and  $Z_{Id}$ is the standard $d$-dimensional Gaussian. Since $\lim_{R \to \infty} \sigma_R=\sigma_\infty<\infty$, we can take $R$ large enough such that $|\sigma_R^2- \sigma_\infty^2|\leq 1$. Then we can take $n$ large enough such that
\begin{equation}\label{eq:upper bound zn}
N_r(Z_n) \leq (\sigma_\infty^2 + 2) N_r(Z_{Id}).
\end{equation}
Since the limiting density $f_{\infty, p, d^*}$ has infinite $r$-R\'enyi entropy, given $M >0$, we can take $R$ large enough such that 
\begin{equation}\label{eq:lower bound x1}
N_r(X_1) \geq M.
\end{equation}
Combining \eqref{eq:upper bound zn} and \eqref{eq:lower bound x1}, we conclude that for inequality \eqref{eq:cr-def} to hold we must have
$$
c_r\leq\frac{(\sigma_\infty^2 + 2) N_r(Z_{Id})}{M}
$$
for all $M >0$.  Then the statement follows from taking the limit $M \to \infty$.
\end{proof}

\begin{remark}

Random vectors in our proof has identical $s$-concave density with $s \leq -r/d$. In the following section, we provide a complementary result by showing that R\'enyi EPI of order $r \in (0,1)$ does hold for $s$-concave densities when $-r/d<s<0$.

\end{remark}

\subsection{R\'enyi EPIs for $s$-concave densities}

As showed above, a generic R\'enyi EPI of the form \eqref{eq:BC-REPI} fails for $r\in(0, 1)$. In this part, we establish R\'enyi EPIs of the forms \eqref{eq:BC-REPI} and \eqref{eq:alpha-modifi} for an important class of random vectors with $s$-concave densities (see \eqref{eq:s-concave}).

Following Lieb \cite{Lie78}, we prove Theorems \ref{thm: s-concave REPI} and \ref{thm: alpha-modif} by showing their equivalent linearizations. The following linearization of \eqref{eq:BC-REPI} and \eqref{eq:alpha-modifi} is due to Rioul \cite{rioul2018renyi}. The $c=1$ case was used in \cite{Li17}. 

\begin{theorem}[\cite{rioul2018renyi}]\label{lemma:linearization}
Let $X_1, \cdots, X_n$ be independent random vectors in $\R^d$. The following statements are equivalent.
\begin{enumerate}
\item There exist a constant $c>0$ and an exponent $\alpha>0$ such that
\begin{equation}\label{eq:exp}
N_r^{\alpha} \left( \sum_{i=1}^n X_i \right) \geq c \sum_{i=1}^n N_r^{\alpha} (X_i).
\end{equation}
\item For any $\lambda_1,\cdots, \lambda_n \geq 0$ such that $\sum_{i=1}^n\lambda_i=1$, one has
\begin{equation}\label{eq:linear}
h_r \left( \sum_{i=1}^n \sqrt{\lambda_i} X_i \right) - \sum_{i=1}^n \lambda_i h_r(X_i) \geq \frac{d}{2}\left( \frac{\log c}{\alpha} + \left( \frac{1}{\alpha} - 1 \right) H(\lambda) \right),
\end{equation}
where $H(\lambda) \triangleq H(\lambda_1,\cdots, \lambda_n)$ is the discrete entropy defined as
$$
H(\lambda) = - \sum_{i=1}^n \lambda_i \log\lambda_i.
$$
\end{enumerate}
\end{theorem}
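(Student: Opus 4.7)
The plan is to exploit the scaling identity $h_r(aX) = h_r(X) + d\log a$ for $a>0$ (equivalently $N_r(aX) = a^2 N_r(X)$), which follows from a change of variables in the definition \eqref{eq:r-renyi-entropy}. This identity is the bridge between the multiplicative formulation \eqref{eq:exp} and the additive formulation \eqref{eq:linear}, and the only nontrivial analytic step in either direction will be a single application of Jensen's inequality for $\log$. Throughout, both statements are read as holding over a class of independent tuples that is closed under coordinatewise rescaling (true, e.g., for arbitrary, log-concave, or $s$-concave families).

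For the direction $1\Rightarrow 2$, I would fix weights $\lambda_i\ge 0$ with $\sum_i\lambda_i = 1$ and apply \eqref{eq:exp} to the rescaled independent vectors $\sqrt{\lambda_i}\,X_i$. The scaling identity turns the right-hand side into $c\sum_i \lambda_i^{\alpha} N_r^{\alpha}(X_i)$. Taking logarithms, multiplying by $d/(2\alpha)$, and subtracting $\sum_i\lambda_i h_r(X_i)$, the desired \eqref{eq:linear} reduces to
$$
\log\!\left(\sum_i \lambda_i^\alpha N_r^\alpha(X_i)\right) \;\geq\; \sum_i \lambda_i \log\!\left(\lambda_i^{\alpha-1} N_r^\alpha(X_i)\right),
$$
which is immediate from Jensen's inequality applied to the concave function $\log$.

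For the direction $2\Rightarrow 1$, I would apply \eqref{eq:linear} to the rescaled vectors $Y_i = X_i/\sqrt{\lambda_i}$ (assuming $\lambda_i>0$), noting that $\sum_i \sqrt{\lambda_i}\,Y_i = \sum_i X_i$. Using the scaling identity to rewrite $h_r(Y_i) = h_r(X_i) - (d/2)\log\lambda_i$, the inequality collapses to
$$
h_r\!\left(\sum_i X_i\right) \;\geq\; \sum_i \lambda_i h_r(X_i) + \frac{d\log c}{2\alpha} + \frac{d}{2\alpha} H(\lambda).
$$
I would then optimise by choosing the tilted weights $\lambda_i^\star = N_r^\alpha(X_i)/S$ with $S = \sum_j N_r^\alpha(X_j)$; this is precisely the choice that saturates the Jensen step in the forward direction. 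Under $\lambda^\star$, the terms $\sum_i\lambda_i^\star h_r(X_i)$ and $(d/2\alpha)H(\lambda^\star)$ combine to give $(d/2\alpha)\log S$, producing $h_r(\sum_i X_i) \geq (d/2\alpha)\log(cS)$, which is exactly \eqref{eq:exp} after exponentiation.

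Neither direction presents a genuine analytic obstacle beyond these algebraic manipulations; the one piece of insight to locate is that the tilted weights $\lambda_i^\star\propto N_r^\alpha(X_i)$ make the forward-direction Jensen step tight, which is what upgrades a one-sided implication into a true equivalence. Degenerate cases where some $\lambda_i = 0$ in the reverse direction can be handled by dropping the corresponding indices and passing to the reduced $n$, since the vanishing $N_r^\alpha(X_i)$ contributions drop out of \eqref{eq:exp} in the limit.
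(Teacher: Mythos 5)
Your proposal is correct. Note that the paper itself gives no proof of this statement: it is quoted from Rioul's work (and the linearization idea goes back to Lieb), so there is no in-paper argument to compare against. Your argument is the standard and, as far as I can tell, complete one: the scaling identity $N_r(aX)=a^2N_r(X)$ converts between the two formulations, Jensen's inequality $\log\bigl(\sum_i\lambda_i u_i\bigr)\ge\sum_i\lambda_i\log u_i$ with $u_i=\lambda_i^{\alpha-1}N_r^\alpha(X_i)$ gives $1\Rightarrow 2$, and the tilted weights $\lambda_i^\star\propto N_r^\alpha(X_i)$, which make that Jensen step an equality, give $2\Rightarrow 1$; I have checked the algebra in both directions and it closes. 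You are also right to flag that the equivalence only makes sense when both statements are read over a class of tuples closed under rescaling (and, for the degenerate $\lambda_i=0$ case, under dropping summands), which is how the paper uses it; this is a point the paper's statement leaves implicit.
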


Inequality \eqref{eq:linear} is the linearized form of inequality \eqref{eq:exp}. One of the ingredients used to establish \eqref{eq:linear} is Young's sharp convolution inequality \cite{Bec75,BL76b}. Its information-theoretic formulation was given in \cite{DCT91}, which we recall below. We denote by $r'$ the H\"{o}lder conjugate of $r$ such that $1/r+1/r'=1$.

\begin{theorem}[\cite{BL76b, DCT91}]\label{info-Young}
Let $r>0$. Let $\lambda_1, \cdots, \lambda_n \geq 0$ such that $\sum_{i=1}^n \lambda_i = 1$, and let $r_1, \cdots, r_n$ be positive reals such that $\lambda_i=r'/r_i'$. For any independent random vectors $X_1, \cdots, X_n$ in $\R^d$, one has
\begin{equation}\label{Young-info-theory}
 h_r \left( \sum_{i=1}^n \sqrt{\lambda_i} X_i \right) - \sum_{i=1}^n \lambda_i h_{r_i}(X_i) \geq \frac{d}{2} r' \left( \frac{\log r}{r} - \sum_{i=1}^n \frac{\log r_i}{r_i} \right).
\end{equation}

\end{theorem}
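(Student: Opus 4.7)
The plan is to derive this information-theoretic inequality by translating the sharp Young convolution inequality (Beckner) and its reverse companion (Brascamp--Lieb) into entropy language, following the Dembo--Cover--Thomas method. First I would rescale: writing $f_i$ for the density of $X_i$ and $g_i(y) = \lambda_i^{-d/2} f_i(y/\sqrt{\lambda_i})$ for the density of $Y_i := \sqrt{\lambda_i}\, X_i$, the random vector $Y := \sum_i Y_i$ has density $g_1 * \cdots * g_n$, and a direct change of variables yields
$$
h_{r_i}(Y_i) = h_{r_i}(X_i) + \tfrac{d}{2} \log \lambda_i, \qquad \log \|g_i\|_{r_i} = \log \|f_i\|_{r_i} + \tfrac{d(1-r_i)}{2 r_i}\log \lambda_i.
$$

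Next I would invoke sharp Young's inequality in $\R^d$. The exponent identity $\sum_i 1/r_i = n - 1 + 1/r$, which is precisely equivalent to $\lambda_i = r'/r_i'$ given $\sum_i \lambda_i = 1$, is the Young compatibility condition, so for $r \geq 1$ one has $\|g_1 * \cdots * g_n\|_r \leq D \prod_i \|g_i\|_{r_i}$ with sharp constant $D$ built from the Babenko--Beckner factors $(p^{1/p}/(p')^{1/p'})^{d/2}$; for $r \in (0,1)$, the Brascamp--Lieb reverse convolution inequality reverses this with the same sharp constant (and the constraint automatically forces $r_i \in (0,1)$, since $r' < 0$ and $\lambda_i > 0$ force $r_i' < 0$). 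Taking logarithms and applying the identity $\log \|f\|_p = \frac{1-p}{p} h_p$ converts both sides into R\'enyi entropies, after which dividing by $(1-r)/r$ --- whose sign flip across $r = 1$ is exactly cancelled by the reversal of Young --- produces the desired linear inequality relating $h_r(Y)$ to $\sum_i \lambda_i h_{r_i}(X_i)$.

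The remaining work is purely computational, and will be the main (though only mildly painful) obstacle: verifying that all explicit constants coalesce into the claimed quantity $\frac{d}{2}\, r' \bigl( \frac{\log r}{r} - \sum_i \frac{\log r_i}{r_i}\bigr)$. Combining the logarithms of the Babenko--Beckner factors with the $\log \lambda_i$ contributions from the rescaling, the terms involving $\log r_i'$ and $\log r'$ collapse via $\lambda_i = r'/r_i'$ and $\sum_i \lambda_i = 1$, leaving exactly the stated right-hand side. This reproduces the Dembo--Cover--Thomas derivation, extended to the full range $r \in (0,\infty)$ via the reverse Young inequality.
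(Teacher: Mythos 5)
The paper does not prove Theorem \ref{info-Young}; it quotes it directly from Brascamp--Lieb \cite{BL76b} and Dembo--Cover--Thomas \cite{DCT91}. Your reconstruction is the standard derivation from those sources --- rescaling by $\sqrt{\lambda_i}$, applying sharp Young for $r\geq 1$ and its reverse for $r\in(0,1)$, converting $L^p$-norms to R\'enyi entropies via $\log\|f\|_p=\frac{1-p}{p}h_p(f)$, and checking that the $\log|r'|$, $\log|r_i'|$ and $\log\lambda_i$ terms cancel --- and it is correct, including the observation that the sign of $\frac{1-r}{r}$ and the direction of Young's inequality flip together across $r=1$.
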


The second ingredient is a comparison between R\'enyi entropies $h_r$ and $h_{r_i}$. When $r>1$, we have $1<r_i<r$, and Jensen's inequality implies that $h_r\leq h_{r_i}$. In this case, one can deduce \eqref{eq:linear} from \eqref{Young-info-theory} with $h_{r_i}$ replaced by $h_r$. However, when $r \in (0,1)$, the order of $r$ and $r_i$ are reversed, i.e., $0<r<r_i<1$, and we need a reverse entropy comparison inequality. The so-called $s$-concave densities do satisfy such a reverse entropy comparison inequality. The following result of Fradelizi, Li, and Madiman \cite{FLM15} serves this purpose.

\begin{theorem}[\cite{FLM15}]\label{main}

Let $s \in \R$. Let $f \colon \R^d \to [0,+\infty)$ be an integrable $s$-concave function. The function
$$
G(r) = C(r) \int_{\R^d} f(x)^r \, dx
$$
is log-concave for $r > \max\{0, -sd\}$, where
\begin{equation}\label{eq:c-r}
C(r) = (r+s) \cdots (r+sd).
\end{equation}

\end{theorem}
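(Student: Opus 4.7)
My plan is to prove log-concavity of $G$ by first reducing to a radial one-dimensional integral via the coarea formula and Borell's analysis of level sets of $s$-concave densities, and then extracting log-concavity from a Brunn-Minkowski-type argument on an auxiliary convex body. I focus on the case $s<0$; the log-concave case $s=0$ is classical, and the case $s>0$ is handled by an analogous (and easier) argument since the support of $f$ is then a compact convex set.

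Starting from $\int f^r\,dx = r\int_0^\infty t^{r-1}m(t)\,dt$ with $m(t)=|\{f\ge t\}|$, the $s$-concavity of $f$ combined with Brunn-Minkowski applied to the convex nested super-level sets (Borell's lemma) implies that $u(w):=m(w^{1/s})^{1/d}$ is concave on $[\|f\|_\infty^s,\infty)$, vanishing at the left endpoint. Setting $\alpha:=-r/s$, so that $\alpha>d$ is equivalent to $r>-sd$ and
\[
C(r) \;=\; |s|^d\prod_{k=1}^d(\alpha-k)\;=\;|s|^d\,\frac{\Gamma(\alpha)}{\Gamma(\alpha-d)},
\]
and substituting $w=t^s$ gives $\int f^r\,dx = \frac{r}{|s|}\int w^{r/s-1}u(w)^d\,dw$. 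Performing $d$ successive integrations by parts, with $u(\|f\|_\infty^s)=0$ killing boundary contributions at every step, transfers the factors $(\alpha-k)$ of $C(r)$ into the integrand and yields the clean radial representation
\[
G(r) \;=\; \kappa_{d,s}\int_{\R^d}\Psi(z)^{-\alpha}\,dz, \qquad \Psi(z):=\psi(|z|),
\]
where $\psi:=u^{-1}$ is convex increasing and $\kappa_{d,s}>0$ depends only on $d$ and $s$. Log-concavity of $G(r)$ in $r$ is thereby reduced to log-concavity in $\alpha>d$ of $\frac{\Gamma(\alpha)}{\Gamma(\alpha-d)}\int\Psi^{-\alpha}\,dz$ for the radial convex function $\Psi$.

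To establish this, I plan to realize $\frac{\Gamma(\alpha)}{\Gamma(\alpha-d)}\int\Psi^{-\alpha}\,dz$ at integer $\alpha=d+m$ as the $(d+m)$-dimensional volume (up to an explicit Beta-function constant) of a Ball-type convex body $B_m\subset\R^{d+m}$ built from the epigraph of $\Psi$. The family $\{B_m(r)\}$ then interpolates Minkowski-linearly in $r$, and Brunn-Minkowski gives that $\Vol(B_m)^{1/(d+m)}$ is concave in $r$. The $d$-fold Gamma prefactor compensates $d$ units of concavity and the $m$ auxiliary dimensions provide the remaining $m$ units, so the net concavity exponent is $0$, yielding log-concavity at integer $\alpha$; extension to real $\alpha>d$ follows by continuity.

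The hard part will be identifying $B_m$: a naive product like $\{(z,y):\psi(|z|)\,|y|^2\le1\}$ fails to be convex in general (that set is convex only under the extra condition $\psi\,\psi''\ge 2(\psi')^2$, which is strictly stronger than convexity of $\psi$), so one must instead use a genuine Ball-style integral construction leveraging the full concavity of $u$, not merely convexity of $\psi$. As a cleaner alternative, one can bypass explicit bodies and invoke Borell-Brascamp-Lieb on a log-concave function of $(r,w)\in\R_+\times\R_+$ whose marginal in the first coordinate equals $G(r)$; there the correct $p$-concavity exponent dictated by $s$ and $d$ produces precisely the prefactor $C(r)=\prod_k(r+ks)$.
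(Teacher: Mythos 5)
The paper does not actually prove this theorem --- it is imported verbatim from \cite{FLM15} --- so your proposal can only be measured against the known argument there, and as written it has a genuine gap at the heart of the first reduction. With $u(w)=m(w^{1/s})^{1/d}$ and $\psi=u^{-1}$, the radial function $\Psi(z)=\psi(|z|)$ has sublevel sets $\{\Psi<w\}$ equal to balls of volume $\omega_d\,u(w)^d=\omega_d\,m(w^{1/s})$; that is, $\Psi^{1/s}$ is (up to the harmless factor $\omega_d$) the spherically symmetric decreasing rearrangement of $f$. Equimeasurability then gives
\begin{equation*}
\int_{\R^d}\Psi(z)^{-\alpha}\,dz=\omega_d\int_{\R^d}f(x)^{r}\,dx ,\qquad \alpha=-r/s,
\end{equation*}
with \emph{no} factor $C(r)$. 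Your claimed identity $G(r)=\kappa_{d,s}\int_{\R^d}\Psi^{-\alpha}\,dz$ would therefore force $C(r)$ to be constant in $r$, which it is not, so the ``clean radial representation'' cannot be correct. The proposed $d$-fold integration by parts does not produce it either: one step gives $\int w^{-\alpha-1}u^d\,dw=\frac{1}{\alpha}\int w^{-\alpha}(u^d)'\,dw$, and iterating requires $(u^d)'',(u^d)''',\dots$, which for a merely concave $u$ are not functions and in any case generate the factors $\alpha,\alpha-1,\dots,\alpha-d+1$ rather than the factors $\alpha-1,\dots,\alpha-d$ appearing in $C(r)=|s|^d\,\Gamma(\alpha)/\Gamma(\alpha-d)$. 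Two further problems: $u(\|f\|_\infty^s)=|\{f=\|f\|_\infty\}|^{1/d}$ need not vanish (take $f$ uniform on a convex body), so the boundary terms you discard can survive; and establishing log-concavity only at integer values $\alpha=d+m$ and then ``extending by continuity'' is not valid, since the three-point inequality $G\bigl(\frac{\alpha_0+\alpha_1}{2}\bigr)^2\geq G(\alpha_0)G(\alpha_1)$ must be proved for arbitrary real arguments and midpoint log-concavity on the integers says nothing about non-integer points.

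Your closing ``cleaner alternative'' is the right idea and is essentially how \cite{FLM15} proceeds, but you only name it. Concretely: for $s<0$ write $f=V^{1/s}$ with $V$ convex (equal to $+\infty$ off the support), so $\int f^r=\int V^{-\alpha}$ and
\begin{equation*}
\Gamma(\alpha)\int_{\R^d}V(x)^{-\alpha}\,dx=\int_0^\infty t^{\alpha-1}\left(\int_{\R^d}e^{-tV(x)}\,dx\right)dt=\int_0^\infty t^{\alpha-d-1}L(t)\,dt,
\end{equation*}
where $L(t)=t^{d}\int e^{-tV(x)}\,dx=\int e^{-tV(y/t)}\,dy$ is log-concave in $t$ by Pr\'ekopa's theorem applied to the convex perspective function $(t,y)\mapsto tV(y/t)$. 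Borell's lemma --- for log-concave $L$ the map $\beta\mapsto\frac{1}{\Gamma(\beta)}\int_0^\infty t^{\beta-1}L(t)\,dt$ is log-concave --- then shows that $\frac{\Gamma(\alpha)}{\Gamma(\alpha-d)}\int V^{-\alpha}$ is log-concave in $\alpha$, and since $\frac{\Gamma(\alpha)}{\Gamma(\alpha-d)}=\prod_{k=1}^d(\alpha-k)=|s|^{-d}C(r)$ with $\alpha$ affine in $r$, this is exactly the theorem. If you replace your first two paragraphs by this chain and supply the two quoted ingredients, the proof closes; as it stands, the gap is not merely technical.
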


We deduce the following R\'enyi entropic comparison for random vectors with  $s$-concave densities.

\begin{corollary}\label{rev-holder}

Let $X$ be a random vector in $\R^d$ with a $s$-concave density. For $-sd < r < q<1$, we have
$$
h_{q}(X) \geq h_r(X) + \log  \frac{C(r)^{\frac{1}{1-r}} C(1)^{\frac{q-r}{(1-q)(1-r)}}}{C(q)^{\frac{1}{1-q}}}.
$$

\end{corollary}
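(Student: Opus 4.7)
The natural approach is a direct application of Theorem \ref{main} to the three-point configuration $r < q < 1$. Since $-sd < r$ and $q < 1$, all three of $r$, $q$, $1$ lie in the interval $(\max\{0,-sd\}, +\infty)$ where the function $G(u) = C(u) \int f^u$ is log-concave.

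Write $q = (1-t)r + t\cdot 1$ with $t = \frac{q-r}{1-r} \in (0,1)$, so that $1-t = \frac{1-q}{1-r}$. Log-concavity of $G$ gives
$$
G(q) \geq G(r)^{1-t} G(1)^{t},
$$
which, using $G(1) = C(1) \int f = C(1)$, rewrites as
$$
C(q) \int f^{q} \geq C(r)^{1-t} C(1)^{t} \left( \int f^{r} \right)^{1-t}.
$$
Taking logarithms and substituting $\int f^{u} = e^{(1-u)h_u(X)}$ for $u \in \{r,q\}$, I get
$$
\log C(q) + (1-q) h_q(X) \geq (1-t)\log C(r) + t \log C(1) + (1-t)(1-r) h_r(X).
$$

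The key simplification is $(1-t)(1-r) = 1-q$, so after dividing by $1-q$ the $h_r(X)$ coefficient becomes $1$, and rearranging yields
$$
h_q(X) \geq h_r(X) + \frac{\log C(r)}{1-r} + \frac{q-r}{(1-q)(1-r)} \log C(1) - \frac{\log C(q)}{1-q},
$$
which is exactly the stated inequality.

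There is no real obstacle here beyond bookkeeping: the only ingredients are Theorem \ref{main}, the normalization $\int f = 1$, and the algebraic identity $1-t = (1-q)/(1-r)$ that makes the $h_r$ coefficient collapse to $1$. I would emphasize in the write-up that the hypothesis $r > -sd$ (together with $q,1 > r$) is precisely what ensures the log-concavity window of $G$ contains all three evaluation points.
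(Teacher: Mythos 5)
Your proposal is correct and follows essentially the same route as the paper: both apply the log-concavity of $G$ from Theorem \ref{main} at the convex combination $q=(1-t)r+t\cdot 1$, use $G(1)=C(1)$, and rearrange (the paper phrases the rearrangement in terms of entropy powers $N_q, N_r$ before taking logarithms, but that is only a cosmetic difference). The algebra, including the key identity $(1-t)(1-r)=1-q$ and the sign of $1-q>0$ when dividing, checks out.
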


\begin{proof}
Write $q=(1-\lambda)\cdot r + \lambda \cdot 1$. Using the log-concavity of the function $G$ in Theorem \ref{main}, we have
$$
G(q) \geq G(r)^{1-\lambda} G(1)^{\lambda} = G(r)^{\frac{1-q}{1-r}} G(1)^{\frac{q-r}{1-r}}.
$$
The above inequality can be rewritten in terms of entropy power as follows
$$
C(q)^{\frac{2}{d} \cdot\frac{1}{1-q}} N_q(X) \geq C(r)^{\frac{2}{d}\cdot\frac{1-q}{1-r}\cdot \frac{1}{1-q}} N_r(X) C(1)^{\frac{2}{d}\cdot\frac{q-r}{1-r}\cdot\frac{1}{1-q}}.
$$
The desired statement follows from taking the logarithm of both sides of the above inequality.
\end{proof}

Theorem \ref{info-Young} together with Corollary \ref{rev-holder} yields the following R\'enyi EPI with a single R\'enyi parameter $r \in (0,1)$ for  $s$-concave densities.

\begin{theorem}\label{lemma:young-s-concave}
Let $s\in(-1/d, 0)$ and $r \in (-sd, 1)$. Let $X_1, \cdots, X_n$ be independent random vectors in $\R^d$ with $s$-concave densities. For all $\lambda = (\lambda_1, \cdots, \lambda_n) \in [0,1]^n$ such that $\sum_{i=1}^n \lambda_i=1$, we have
$$
h_r \left( \sum_{i=1}^n \sqrt{\lambda_i} X_i \right) - \sum_{i=1}^n \lambda_i h_r(X_i)\geq \frac{d}{2}A(\lambda)+\sum_{k=1}^d g_k(\lambda),
$$
where
\begin{equation*}\label{eq:a-lambda}
\begin{aligned}
A(\lambda) & =  r'\left(\left(1-\frac{1}{r'}\right)\log\left(1-\frac{1}{r'}\right)-\sum_{i=1}^n\left(1-\frac{\lambda_i}{r'}\right)\log\left(1-\frac{\lambda_i}{r'}\right)\right), \\
g_k(\lambda) & =  (1-n)r'\log(1+ks)+(1-r')\log\Big(1+\frac{ks}{r}\Big)+r'\sum_{i=1}^n\left(1-\frac{\lambda_i}{r'}\right)\log\left(1+ks\left(1-\frac{\lambda_i}{r'}\right)\right).
\end{aligned}
\end{equation*}
\end{theorem}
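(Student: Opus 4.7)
The plan is to execute the strategy explicitly flagged in the paragraphs preceding the statement: combine the information-theoretic sharp Young inequality (Theorem \ref{info-Young}) with the reverse R\'enyi comparison for $s$-concave densities (Corollary \ref{rev-holder}). Without loss of generality, assume every $\lambda_i>0$, since indices with $\lambda_i=0$ contribute nothing to either side.

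First, I would tie the free parameters of Theorem \ref{info-Young} to $\lambda$ via $\lambda_i = r'/r_i'$, equivalently $1/r_i = 1 - \lambda_i/r'$. Because $r'<0$ (as $r<1$) and $\lambda_i \in (0,1]$, one checks that $r_i \in [r,1)$, so in particular $r_i > r > -sd$ and Corollary \ref{rev-holder} applies to each $X_i$. Theorem \ref{info-Young} then gives
\[
h_r\!\left(\sum_i \sqrt{\lambda_i}\, X_i\right) - \sum_i \lambda_i\, h_{r_i}(X_i) \geq \frac{d}{2}\, r'\left(\frac{\log r}{r} - \sum_i \frac{\log r_i}{r_i}\right).
\]
Applying Corollary \ref{rev-holder} with $q=r_i$ to each $X_i$, taking the $\lambda_i$-weighted sum, and substituting yields
\[
h_r\!\left(\sum_i \sqrt{\lambda_i}\, X_i\right) - \sum_i \lambda_i\, h_r(X_i) \geq \sum_i \lambda_i L_i + \frac{d}{2}\, r'\left(\frac{\log r}{r} - \sum_i \frac{\log r_i}{r_i}\right),
\]
where $L_i = \log\!\left(C(r)^{1/(1-r)} C(1)^{(r_i-r)/((1-r_i)(1-r))} C(r_i)^{-1/(1-r_i)}\right)$.

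The remaining work, and the main obstacle, is purely algebraic: identify this right-hand side with $\frac{d}{2} A(\lambda) + \sum_{k=1}^d g_k(\lambda)$. I would introduce the shorthand $u_i := 1-\lambda_i/r' = 1/r_i$ and $u := 1-1/r' = 1/r$; then $u_i \log u_i = -\log r_i/r_i$ and $u\log u = -\log r/r$ convert the Young contribution directly into $-\frac{d}{2} A(\lambda)$. For the $L_i$ contribution, I would factor $\log C(r) = \sum_{k=1}^d \log(r+ks)$ (and similarly for $C(1)$ and $C(r_i)$) and use $r_i + ks = (1+ks\, u_i)/u_i$ together with $1+ks/r = (r+ks)/r$ to express each piece in terms of $\log(r+ks)$, $\log(1+ks)$, $\log(1+ks\, u_i)$, $\log u_i$, and $\log r$. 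The auxiliary identities $1-r' = 1/(1-r)$, $r/(1-r)+r' = 0$, and $\lambda_i/(1-r_i) = -r'\, u_i$ make the $\log(r+ks)$ and $\log(1+ks)$ coefficients in $\sum_i \lambda_i L_i - \sum_k g_k(\lambda)$ vanish, leaving exactly $d\cdot [(1-r')\log r - r'\sum_i u_i \log u_i] = d\, A(\lambda)$. Thus $\sum_i \lambda_i L_i - \frac{d}{2} A(\lambda) = \frac{d}{2} A(\lambda) + \sum_k g_k(\lambda)$, as required.
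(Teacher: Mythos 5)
Your proposal is correct and follows essentially the same route as the paper: setting $\lambda_i = r'/r_i'$, combining Theorem \ref{info-Young} with Corollary \ref{rev-holder}, and then verifying algebraically that the resulting lower bound equals $\frac{d}{2}A(\lambda)+\sum_{k=1}^d g_k(\lambda)$. Your substitution $u_i = 1/r_i = 1-\lambda_i/r'$ is just a notational repackaging of the paper's factorization $C(r)=r^d D(r)$ and the identities $1/(1-r)=1-r'$, $\lambda_i/(1-r_i)=\lambda_i-r'$, so the two computations coincide.
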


\begin{proof}
Let $r_i$ be defined by $\lambda_i=r'/r_i'$, where $r'$ and $r_i'$ are H\"{o}lder conjugates of $r$ and $r_i$, respectively. Combining Theorem \ref{info-Young} with Corollary \ref{rev-holder}, we have
\begin{equation}\label{eq:entropy-linear-ineq}
h_r \left( \sum_{i=1}^n \sqrt{\lambda_i} X_i \right) - \sum_{i=1}^n \lambda_i h_r(X_i) \geq \frac{d}{2} r' \left( \frac{\log r}{r} - \sum_{i=1}^n \frac{\log r_i}{r_i} \right) + \sum_{i=1}^n \lambda_i \log \frac{C(r)^{\frac{1}{1-r}}C(1)^{\frac{r_i-r}{(1-r_i)(1-r)}}}{C(r_i)^{\frac{1}{1-r_i}}}.
\end{equation}
Notice that $C(r)=r^dD(r)$, where $C(r)$ is given in \eqref{eq:c-r} and $D(r)=(1+s/r)\cdots(1+sd/r)$. Thus,
\begin{eqnarray}\label{eq:2nd term}
& ~ & \sum_{i=1}^n \lambda_i \log \frac{C(r)^{\frac{1}{1-r}}C(1)^{\frac{r_i-r}{(1-r_i)(1-r)}}}{C(r_i)^{\frac{1}{1-r_i}}} \nonumber \\  
& = & \sum_{i=1}^n\lambda_i\left(\frac{\log D(r)}{1-r}\!+\!\left(\frac{1}{1-r_i}\!-\!\frac{1}{1-r}\right)\log D(1)\!-\!\frac{\log D(r_i)}{1-r_i}\right) \!+\! d\left(\frac{\log r}{1-r}\!-\!\sum_{i=1}^n\lambda_i\frac{\log r_i}{1-r_i}\right).
\end{eqnarray}
Using the identities $1/(1-r)=1-r'$ and $\lambda_i/(1-r_i)=\lambda_i-r'$, we have
\begin{eqnarray}\label{eq:1st term}
& ~ & \sum_{i=1}^n\lambda_i\left(\frac{\log D(r)}{1-r}+\left(\frac{1}{1-r_i}-\frac{1}{1-r}\right)\log D(1)-\frac{\log D(r_i)}{1-r_i}\right) \nonumber \\ 
& = & (1-r')\log D(r)+(1-n)r'\log D(1)+\sum_{k=1}^d\sum_{i=1}^n(r'-\lambda_i)\log\left(1+\frac{ks}{r_i}\right) \nonumber\\
&=&\sum_{k=1}^d\left((1-r')\log\left(1+\frac{ks}{r}\right)+(1-n)r'\log(1+ks)+\sum_{i=1}^n(r'-\lambda_i)\log\left(1+\frac{ks}{r_i}\right)
\right) \nonumber\\
&=& \sum_{k=1}^d g_k(\lambda).
\end{eqnarray}
The last identity follows from $1/r_i=1-\lambda_i/r'$. Using \eqref{eq:1st term} and \eqref{eq:2nd term}, the RHS of \eqref{eq:entropy-linear-ineq} can be written as
$$
\frac{d}{2} r' \left( \frac{\log r}{r} - \sum_{i=1}^n \frac{\log r_i}{r_i} \right) + d\left(\frac{\log r}{1-r}-\sum_{i=1}^n\lambda_i\frac{\log r_i}{1-r_i}\right) + \sum_{k=1}^d g_k(\lambda) = \frac{d}{2}A(\lambda) + \sum_{k=1}^d g_k(\lambda).
$$
This concludes the proof.
\end{proof}

Having Theorems \ref{lemma:linearization} and \ref{lemma:young-s-concave} at hand, we are ready to prove Theorems 2 and 3.

\subsubsection{Proof of Theorem \ref{thm: s-concave REPI}}





Put Theorems \ref{lemma:linearization} and \ref{lemma:young-s-concave} together. Then it suffices to find $c$ such that the following inequality 
$$
\frac{d}{2}A(\lambda)+\sum_{k=1}^d g_k(\lambda)\geq\frac{d}{2}\log c
$$
holds for all $\lambda=(\lambda_1, \cdots, \lambda_n) \in [0,1]^n$ such that $\sum_{i=1}^n\lambda_i = 1$. Hence, we can set 
$$
c=\inf_\lambda\exp\left(A(\lambda)+\frac{2}{d}\sum_{k=1}^d g_k(\lambda)\right),
$$
where the infimum runs over all $\lambda=(\lambda_1, \cdots, \lambda_n) \in [0,1]^n$ such that $\sum_{i=1}^n\lambda_i = 1$.
For fixed $r$, both $A(\lambda)$ and $g_k(\lambda)$ are sum of one-dimensional convex functions of the form $(1+x)\log(1+x)$. Furthermore, both $A(\lambda)$ and $g_k(\lambda)$ are permutation invariant. Hence, the minimum is achieved at $\lambda=(1/n, \cdots, 1/n)$. This yields the numerical  value of $c$ in Theorem \ref{thm: s-concave REPI}.

\subsubsection{Proof of Theorem \ref{thm: alpha-modif}}

The following lemma in \cite{MM18-IEEE} serves us in the proof of Theorem \ref{thm: alpha-modif}.

\begin{lemma}[\cite{MM18-IEEE}]\label{lem:calc}
Let $c>0$. Let $L,F : [0, c] \to [0,\infty)$ be twice differentiable on $(0,c]$, continuous on $[0,c]$, such that $L(0) = F(0)= 0$ and $L'(c)=F'(c) =0$.  Let us also assume that $F(x)>0$ for $x>0$, that $F$ is strictly increasing, and that $F'$ is strictly decreasing. Then $\frac{L''}{F''}$ increasing on $(0,c)$ implies that $\frac{L}{F}$ is increasing on $(0,c)$ as well. In particular,
$$
\max_{x \in [0,c]} \frac{L(x)}{F(x)} = \frac{L(c)}{F(c)}.
$$
\end{lemma}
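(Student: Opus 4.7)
The plan is to prove Lemma~\ref{lem:calc} by two applications of a L'H\^opital-type monotonicity principle, which I would state and justify first as an auxiliary tool: if $f,g$ are differentiable on $(a,b)$ with $f(a)=g(a)=0$ and $g,g'>0$ on $(a,b)$, then monotonicity of $f'/g'$ on $(a,b)$ transfers to monotonicity of $f/g$ on $(a,b)$ in the same direction. The one-line justification uses Cauchy's mean value theorem, which gives $f(x)/g(x)=f'(\xi)/g'(\xi)$ for some $\xi\in(a,x)$, together with the identity $(f/g)'=(g'/g)(f'/g'-f/g)$: if $f'/g'$ is increasing then $f(x)/g(x)=f'(\xi)/g'(\xi)\leq f'(x)/g'(x)$ because $\xi<x$, which forces $(f/g)'\geq 0$.

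Before applying the principle, I would record the positivity needed. Since $F$ is twice differentiable and $F'$ is strictly decreasing, a direct difference-quotient argument gives $F''\leq 0$ on $(0,c)$; the hypothesis that $L''/F''$ is defined on $(0,c)$ forces $F''\neq 0$ there, so in fact $F''<0$ on $(0,c)$. Combined with $F'(c)=0$ and strict decrease of $F'$, this yields $F'>0$ on $(0,c)$; and $F>0$ on $(0,c]$ is given.

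The first application transfers monotonicity from $L''/F''$ to $L'/F'$. Since the common zero of $L'$ and $F'$ lies at the \emph{right} endpoint $c$, I would reduce to the standard left-endpoint setup by the reflection $t=c-x$, setting $\phi(t)=L'(c-t)$ and $\psi(t)=F'(c-t)$ on $[0,c]$. Then $\phi(0)=\psi(0)=0$, both $\psi=F'(c-\cdot)$ and $\psi'=-F''(c-\cdot)$ are positive on $(0,c)$, and $\phi'(t)/\psi'(t)=(L''/F'')(c-t)$ is decreasing in $t$ by hypothesis. The principle then gives $\phi/\psi$ decreasing in $t$, which is exactly the statement that $L'/F'$ is increasing on $(0,c)$. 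The second application is then direct: on the pair $L,F$ with left-endpoint vanishing $L(0)=F(0)=0$ and positivity of $F,F'$ on $(0,c)$, the principle transfers increasingness of $L'/F'$ to increasingness of $L/F$ on $(0,c)$. The final assertion $\max_{[0,c]}L/F=L(c)/F(c)$ follows from continuity at $c$ together with $F(c)>0$ (the value at $0$ being handled as a limit, which is at most $L(c)/F(c)$ by monotonicity). I do not foresee a genuine obstacle; the only point requiring care is orienting the two applications of the monotonicity principle correctly---once at the left endpoint and once at the right---which is handled cleanly by the reflection $x\mapsto c-x$.
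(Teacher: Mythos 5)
Your proof is correct: the positivity bookkeeping ($F''<0$, $F'>0$ on $(0,c)$), the reflection $x\mapsto c-x$ to move the common zero of $L'$ and $F'$ to a left endpoint, and the two applications of the monotone L'H\^opital rule (justified via Cauchy's mean value theorem and the identity $(f/g)'=(g'/g)(f'/g'-f/g)$) are all sound. Note that the paper itself states this lemma with a citation to \cite{MM18-IEEE} and gives no proof, so there is nothing internal to compare against; your iterated monotone-L'H\^opital argument is the standard proof of such statements and is essentially the one in the cited source.
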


\begin{proof}[Proof of Theorem \ref{thm: alpha-modif}]
Apply Theorems \ref{lemma:linearization} and  \ref{lemma:young-s-concave} with $n=2$. Then it suffices to find $\alpha$ such that for all $\lambda \in [0,1]$ we have
$$
\frac{d}{2}A(\lambda)+\sum_{k=1}^d g_k(\lambda)\geq\frac{d}{2}\left(\frac{1}{\alpha}-1\right)H(\lambda),
$$
where
\begin{eqnarray*}
A(\lambda) & = & r'\left(\left(1\!-\!\frac{1}{r'}\right)\log\left(1\!-\!\frac{1}{r'}\right)\!-\!\left(1\!-\!\frac{\lambda}{r'}\right)\log\left(1\!-\!\frac{\lambda}{r'}\right)\!-\!\left(1\!-\!\frac{1\!-\!\lambda}{r'}\right)\log\left(1\!-\!\frac{1\!-\!\lambda}{r'}\right)\right), \\
g_k(\lambda)&=&(1-r')\log\Big(1+\frac{ks}{r}\Big)-r'\log(1+ks)  \\
&+&r'\left(\left(1\!-\!\frac{\lambda}{r'}\right)\log\left(1\!+\!ks\left(1\!-\!\frac{\lambda}{r'}\right)\right)\!+\!\left(1\!-\!\frac{1\!-\!\lambda}{r'}\right)\log\left(1\!+\!ks\left(1\!-\!\frac{1\!-\!\lambda}{r'}\right)\right)\right).
\end{eqnarray*}
We can set
\begin{equation}\label{alpha-val}
\alpha=\left(1-\sup_{0\leq \lambda\leq 1}\left(-\frac{A(\lambda)}{H(\lambda)}-\frac{2}{d}\sum_{k=1}^d\frac{g_k(\lambda)}{H(\lambda)}\right)\right)^{-1}.
\end{equation}
We will show that the optimal value is achieved at $\lambda=1/2$. Since the function is symmetric about $\lambda=1/2$, it suffices to show that
\begin{equation}\label{eq:obj-opt}
-\frac{A(\lambda)}{H(\lambda)}-\frac{2}{d}\sum_{k=1}^n\frac{g_k(\lambda)}{H(\lambda)}
\end{equation}
is increasing on $[0, 1/2]$. It has been shown in \cite{Li17} that $-A(\lambda)/H(\lambda)$ is increasing on $[0, 1/2]$. We will show that for each $k=1,\cdots, n$ the function $-g_k(\lambda)/H(\lambda)$ is also increasing on $[0, 1/2]$. One can check that $-g_k(\lambda)$ and $H(\lambda)$ satisfy the conditions in Lemma \ref{lem:calc}. 
Hence, it suffices to show that $-g_k''(\lambda)/H''(\lambda)$ is increasing on $[0, 1/2]$. 
Elementary calculation yields that
$$
H''(\lambda)=-\frac{1}{\lambda(1-\lambda)}.
$$
Define $x=\frac{\lambda}{|r'|}$ and $y=\frac{1-\lambda}{|r'|}=\frac{1}{|r'|} - x$. Then one can check that
$$
-g_k''(\lambda)=\frac{ks}{|r'|}\left( \frac{1}{1+ks(1+x)} + \frac{1}{1+ks(1+y)} + \frac{1}{(1+ks(1+x))^2} + \frac{1}{(1+ks(1+y))^2} \right).
$$
Hence, we have
$$
-\frac{g_k''(\lambda)}{H''(\lambda)} = ksr'W(x), 
$$
where
$$
W(x)= xy\left( \frac{1}{1+ks(1+x)} + \frac{1}{1+ks(1+y)} + \frac{1}{(1+ks(1+x))^2} + \frac{1}{(1+ks(1+y))^2} \right). 
$$
Since $s, r'<0$, it suffices to show that $W(x)$ is increasing on $[0, \frac{1}{2|r'|}]$. We rewrite $W$ as follows
$$
W(x) = W_1(x) + W_2(x), 
$$
where
\begin{eqnarray}
W_1(x) &=& xy\left(\frac{1}{1+ks(1+x)} + \frac{1}{1+ks(1+y)}\right)\notag,\\
W_2(x)&=& xy\left(\frac{1}{(1+ks(1+x))^2} + \frac{1}{(1+ks(1+y))^2} \right) \label{eq:w2}. 
\end{eqnarray}
We will show that both $W_1(x)$ and $W_2(x)$ are increasing on $[0, \frac{1}{2|r'|}]$.\\

Now let us focus on $W_1$. Since $y = \frac{1}{|r'|} - x$, one can check that
$$
W_1'(x)=\left( \frac{1}{|r'|} \!-\! 2x \right) \left( \frac{1}{1\!+\!ks(1\!+\!x)} \!+\! \frac{1}{1\!+\!ks(1\!+\!y)} \right) - ks xy \left( \frac{1}{(1\!+\!ks(1\!+\!x))^2} \!-\! \frac{1}{(1\!+\!ks(1\!+\!y))^2} \right).
$$
Let us denote
\begin{eqnarray}
a &\triangleq& a(x) = 1+ks(1+x) \label{eq:a},\\
b &\triangleq& b(x) = 1+ks(1+y) = 1+ks\left(\frac{1}{|r'|} - x + 1\right) \label{eq:b}.
\end{eqnarray}
The condition $r > -sd $ implies that $ a, b\geq 0$.
With these notations, we have
\begin{eqnarray*}
W_1'(x) 
& = & \left( \frac{1}{a} + \frac{1}{b} \right) \left( \frac{1}{|r'|} - 2x -ks x y \left( \frac{1}{a} - \frac{1}{b} \right) \right)\\
&=&\left( \frac{1}{a} + \frac{1}{b} \right) \left( \frac{1}{|r'|} - 2x \right) \left( 1 - (ks)^2  \frac{xy}{ab} \right).
\end{eqnarray*}
The last identity follows from
$$
\frac{1}{a} - \frac{1}{b} = \frac{ks}{ab} \left( \frac{1}{|r'|} - 2x \right).
$$
Since $a, b\geq 0$ and $x \in [0, \frac{1}{2|r'|}]$, it suffices to show that
$$
ab - (ks)^2 xy \geq 0.
$$
Using \eqref{eq:a} and \eqref{eq:b}, we have
$$
ab - (ks)^2 xy=(1+ks)\left(1+\frac{ks}{r}\right).
$$
Then the desired statement follows from that $s>-1/d$ and $r >-sd$. We conclude that $W_1$ is increasing on $[0, \frac{1}{2|r'|}]$.

It remains to show that $W_2(x)$ is increasing on $[0, \frac{1}{2|r'|}]$. Recall the definition of $W_2(x)$ in \eqref{eq:w2}, one can check that
\begin{eqnarray*} 
W_2'(x) & = & \left( \frac{1}{|r'|} - 2x \right) \left( \frac{1}{a^2} + \frac{1}{b^2} \right) - 2ks x y \left( \frac{1}{a^3} - \frac{1}{b^3} \right) \\ 
& = & \frac{b-a}{ks} \left( \frac{1}{a^2} + \frac{1}{b^2} \right) - 2ks x y \left( \frac{1}{a^3} - \frac{1}{b^3} \right) \\ 
& = & \frac{b-a}{ksa^3 b^3}T(x),
\end{eqnarray*}
where $a$ and $b$ are defined in \eqref{eq:a} and \eqref{eq:b}, and
$$ 
T(x)= ab(a^2 + b^2) - 2 k^2s^2 x y (a^2 + ab + b^2). 
$$
Since
$$
\frac{b-a}{ks} = \frac{1}{|r'|} - 2x\geq0,~~x\in[0, \frac{1}{2|r'|}], 
$$
it suffices to show that $T(x)\geq0$ for $[0, \frac{1}{2|r'|}]$. Using the identity
$$
a'(x)b(x) + a(x)b'(x) = ks(b-a) = - a(x)a'(x) - b(x)b'(x),
$$
one can check that
$$
T'(x) = ks(a-b)U(x),
$$
where
$$
U(x) = a^2 + b^2 + 4ab - 2 k^2s^2 x y.
$$
Notice that $U'(x) \equiv 0$, which implies that $U(x)$ is a constant. Since $a, b\geq0$, we have
$$
U(0) = a^2 + b^2 + 4ab > 0.
$$
Hence, $T'(x) \leq 0$, i.e., $T(x)$ is decreasing. Therefore, since $a=b$ when $x=\frac{1}{2|r'|}$, we have
$$
T(x) \geq T \left( \frac{1}{2 |r'|} \right)=2a^2(a^2-3k^2s^2x^2) \quad \text{at} \, x = \frac{1}{2 |r'|}.
$$
It suffices to have
$$
a^2 \geq 3 k^2s^2 x^2, \quad x = \frac{1}{2 |r'|},
$$
which is equivalent to
$$
\frac{1}{|r'|} \leq \frac{2}{1+\sqrt{3}} \left( \frac{1}{k|s|} - 1 \right).
$$
This finishes the proof that every $-g_k(\lambda)/H(\lambda)$ is also increasing on $[0,1/2]$. Then the numerical value of $\alpha$ in Theorem \ref{thm: alpha-modif} follows from setting $\lambda = 1/2$ in \eqref{alpha-val}.
\end{proof}

\begin{remark}
Our optimization argument heavily relies on the fact that $-A(\lambda)/H(\lambda)$ and $-g_k(\lambda)/H(\lambda)$ are monotonically increasing for $\lambda\in[0, 1/2]$. As observed in \cite{Li17}, the monotonicity of $-A(\lambda)/H(\lambda)$ does not depend on the value of $r$. Numerical examples show that $-g_k(\lambda)/H(\lambda)$, even the whole quantity in \eqref{eq:obj-opt}, is
not monotone when $r$ is small. This is one of the reasons for the restriction $r>r_0$.
\end{remark}

\begin{remark}
Note that the condition $r > -sd$ of Theorem \ref{main} can be rewritten as
$$
\frac{1}{|r'|} < \left( \frac{1}{d|s|} - 1 \right).
$$
We do not know whether Theorem \ref{thm: alpha-modif} holds when
$$
\frac{2}{1+\sqrt{3}} \left( \frac{1}{d|s|} - 1 \right) < \frac{1}{|r'|} < \left( \frac{1}{d|s|} - 1 \right).
$$
\end{remark}

\section{An entropic characterization of $s$-concave densities}\label{sec:max-ent}

Let $X$ and $Y$ be real-valued random variables (possibly dependent) with the identical density $f$. Cover and Zhang \cite{CZ94} proved that
$$
h(X+Y)\leq h(2X)
$$
holds for every coupling of $X$ and $Y$ if and only if $f$ is log-concave. This yields an entropic characterization of one-dimensional log-concave densities. We will extend Cover and Zhang's result to R\'enyi entropies of random vectors with $s$-concave densities (defined in \eqref{eq:s-concave}), which particularly include log-concave densities as a special case. This was previously proved in \cite{LM18:isit} when $f$ is continuous.

Firstly, we introduce some classical variations of convexity and concavity which will be needed in our proof.

\begin{definition}\label{def:lambda-convex}
Let $\lambda \in (0,1)$ be fixed. A function $f \colon \R^d \to \R$ with convex support is called almost $\lambda$-convex if the following inequality
\begin{eqnarray}\label{eq:convex-1}
f((1-\lambda)x + \lambda y) \leq (1-\lambda) f(x) + \lambda f(y)
\end{eqnarray}
holds for almost every pair $x, y$ in the domain of $f$. We say that $f$ is $\lambda$-convex if the above inequality holds for every pair $x, y$ in the domain of $f$. Particularly, for $\lambda=1/2$, it is usually called mid-convex or Jensen convex. We say that $f$ is convex if $f$ is $\lambda$-convex for any $\lambda \in (0,1)$.
\end{definition}

One can define almost $\lambda$-concavity, $\lambda$-concavity and concavity by reversing inequality \eqref{eq:convex-1}. Adamek \cite[Theorem 1]{adamek2003almost} showed that an almost $\lambda$-convex function is identical to a $\lambda$-convex function except on a set of Lebesgue measure 0. (To apply the theorem there, one can take the ideals $\mathcal{I}_1$ and $\mathcal{I}_2$ as the family of sets with Lebesgue measure 0 in $\R^d$ and $\R^{2d}$, respectively). In general, $\lambda$-convexity is not equivalent to convexity, as it is not a strong enough notion to imply continuity, at least not in a logical framework that accepts the axiom of choice.  Indeed, counterexamples can be constructed using a Hamel basis for $\R$ as a vector space over $\mathbb{Q}$.  However, in the case that $f$ is Lebesgue measurable, a classical result of Blumberg \cite{Blu19} and Sierpinski \cite{Sie20} (see also \cite{CM} in more general setting) shows that $\lambda$-convexity implies continuity, and thus convexity.

\begin{theorem}\label{thm:max-entropy}
Let  $s>-1/d$ and we define $r=1+s$.  Let $f$ be a probability density on $\mathbb{R}^d$. The following statements are equivalent.
\begin{enumerate}
\item The density $f$ is $s$-concave.
\item For any $\lambda\in(0, 1)$, we have $h_r(\lambda X+(1-\lambda)Y)\leq h_r(X)$ for any random vectors $X$ and $Y$ with the identical density $f$.
\item We have $h_r\left(\frac{X+Y}{2}\right)\leq h_r(X)$ for any random vectors $X$ and $Y$ with the identical density $f$.
\end{enumerate}

\end{theorem}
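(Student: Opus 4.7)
The plan is to prove the equivalences through the cycle $1 \Rightarrow 2 \Rightarrow 3 \Rightarrow 1$. The implication $2 \Rightarrow 3$ is trivial (take $\lambda = 1/2$), so only $1 \Rightarrow 2$ and $3 \Rightarrow 1$ require work.

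For $1 \Rightarrow 2$, my approach combines the pointwise $s$-concavity of $f$ with H\"older's inequality. Fix $\lambda \in (0,1)$, let $(X,Y)$ be any coupling whose marginals both equal $f$, denote by $p(x,y)$ the joint density, and let $g$ be the density of $Z = \lambda X + (1-\lambda) Y$. The change of variables $(x,y) = (z - (1-\lambda) w, z + \lambda w)$, which has unit Jacobian, yields the identity
$$\int_{\R^d} f(z)^{r-1} g(z)\, dz = \int_{\R^{2d}} f(\lambda x + (1-\lambda) y)^{r-1} p(x,y)\, dx\, dy.$$
Raising the defining $s$-concavity inequality to the power $s = r-1$ (which reverses it when $s<0$) and integrating against $p$, whose marginals are $f$, produces $\int f^{r-1} g\, dz \geq \int f^r\, dx$ when $r > 1$ and the reverse bound when $r < 1$. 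Applying H\"older's inequality (or reverse H\"older when $r\in(0,1)$) with exponents $(r, r/(r-1))$ to the left-hand side then upgrades this to the analogous comparison between $\int g^r\, dz$ and $\int f^r\, dx$, which, after accounting for the sign of $1-r$, reads $h_r(Z) \leq h_r(X)$.

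For $3 \Rightarrow 1$, I would run a variational argument on the convex set of couplings of $f$ with itself, followed by a localization. The functional $\Phi(p) := \int g_p^r\, dz$ is convex in $p$ for $r>1$ and concave for $r<1$, since $p \mapsto g_p$ is affine and $u\mapsto u^r$ inherits the corresponding convexity. Hypothesis 3 identifies the diagonal coupling $p_{\mathrm{eq}}(x,y) = f(x)\delta(x-y)$ as the extremizer of $\Phi$, so one-sided differentiation along $p_t = (1-t)p_{\mathrm{eq}} + tp$ followed by a symmetrization in $(x,y)$ yields
$$\mathrm{sgn}(r-1) \int_{\R^{2d}} \left[ f\!\left(\tfrac{x+y}{2}\right)^{r-1} - \tfrac{1}{2}\left(f(x)^{r-1} + f(y)^{r-1}\right) \right] p(x,y)\, dx\, dy \geq 0$$
for every symmetric coupling $p$ of $f$ with itself. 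To descend to a pointwise inequality, I would test against swap-type couplings of the form $p = \alpha\,\mathbf{1}_{(A\times B)\cup(B\times A)}/(|A||B|) + (f(x) - m_\alpha(x))\,\delta(x-y)$, where $A \ni x_0$ and $B \ni y_0$ are small balls of equal volume centered at Lebesgue points of $f$, and $\alpha$ is chosen small enough that the diagonal filler $f(x) - m_\alpha(x)$ is non-negative. The diagonal part contributes nothing because the bracketed kernel vanishes on $\{x=y\}$, and Lebesgue differentiation as $|A|, |B| \to 0$ delivers the a.e.~sign condition
$$\mathrm{sgn}(r-1) \left[f\!\left(\tfrac{x_0+y_0}{2}\right)^{r-1} - \tfrac{1}{2}\left(f(x_0)^{r-1} + f(y_0)^{r-1}\right)\right] \geq 0,$$
i.e., almost midpoint concavity (resp.\ convexity) of $f^{r-1}$ when $r > 1$ (resp.\ $r<1$). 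Adamek's theorem then adjusts $f$ on a null set so that midpoint $s$-concavity holds pointwise, and the Sierpinski--Blumberg theorem promotes measurable midpoint convexity to genuine convexity. Setting $s = r-1$ recovers the $s$-concavity of $f$.

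The main obstacle is making the localization rigorous when $f$ is only Lebesgue measurable: one has to restrict $A$ and $B$ to the ``good'' subsets of small balls around $x_0, y_0$ on which $f$ is bounded below by, say, $\tfrac{1}{2}f(x_0)$ and $\tfrac{1}{2}f(y_0)$ respectively, and then choose $\alpha \ll |A|\min(\inf_A f,\inf_B f)$ so that the diagonal filler remains a valid probability density. Working at Lebesgue points with $f(x_0),f(y_0)>0$ and invoking Lebesgue differentiation handles this cleanly. A secondary technicality is justifying the differentiation of $\Phi$ at $p_{\mathrm{eq}}$, which follows by dominating $g_t^{r-1}|g_p - f|$ by $(f+g_p)^r$ and applying dominated convergence on test couplings with $\int g_p^r\, dz < \infty$.
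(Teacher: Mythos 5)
Your proposal is correct and follows essentially the same route as the paper: the implication $1\Rightarrow 2$ via concavity (resp.\ convexity) of $f^{r-1}$ plus H\"older (resp.\ reverse H\"older), and $3\Rightarrow 1$ by perturbing the diagonal coupling off the diagonal, linearizing in the perturbation size to extract the midpoint inequality for $f^{r-1}$ almost everywhere, and then invoking Adamek and Blumberg--Sierpinski to upgrade to genuine $(r-1)$-concavity. The only difference is presentational: the paper argues $3\Rightarrow 1$ by contradiction with an explicit translate-based ``mass transfer'' from $(x\pm x_0,x\pm x_0)$ to $(x-x_0,x+x_0)$ and $(x+x_0,x-x_0)$ and differentiates $\int \hat f^r$ in the transferred mass $\delta$, whereas you phrase the same computation as a first-order optimality condition for $\Phi(p)=\int g_p^r$ at the diagonal coupling followed by localization with product-type swap couplings and Lebesgue differentiation.
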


\begin{proof}
We only prove the statement for $s>0$, or equivalently $r>1$. The proof for $-1/d<s<0$, or equivalently $1-1/d<r<1$, is similar and sketched below.

$1 \Longrightarrow 2$: The proof is taken from \cite{LM18:isit}. We include it for completeness.  Let $g$ be the density of $\lambda X+(1-\lambda)Y$. Then we have  
\begin{eqnarray}
h_r(X) 
    &=& 
        \frac{1}{1-r}\log \mathbb{E}f^{r-1}(X) \notag
            \\
    &=& 
        \frac{1}{1-r}\log (\lambda \mathbb{E} f^{r-1}(X)+(1-\lambda)\mathbb{E} f^{r-1}(Y)) \label{eq:identical distri}
            \\
    &\geq& 
        \frac{1}{1-r}\log \mathbb{E} f^{r-1}(\lambda X+(1-\lambda)Y) \label{eq:concavity}
            \\
    &=& 
            \frac{1}{1-r}\log\int_{\R^d} f(x)^{r-1}g(x)dx\notag
                \\
    &\geq& 
        \frac{1}{1-r}\log \left(\int_{\R^d} f(x)^{r}dx\right)^{1-\frac{1}{r}}\left(\int_{\R^d} g(x)^{r}dx\right)^{\frac{1}{r}} \label{eq:holder}
            \\
    &=&
        \frac{r-1}{r}h_r(X)+\frac{1}{r}h_r(\lambda X+(1-\lambda)Y)\notag.
\end{eqnarray}
This is equivalent to the desired statement. Identity \eqref{eq:identical distri} follows from the assumption that $X$ and $Y$ have the same distribution. In inequality \eqref{eq:concavity}, we use the concavity of $f^{r-1}$ and the fact that $\frac{1}{1-r}\log x$ is decreasing when $r>1$. Inequality \eqref{eq:holder} follows from H\"{o}lder's inequality and the fact that $\frac{1}{1-r}\log x$ is decreasing when $r>1$. 
For $1-1/d<r<1$, the statement follows from the same argument in conjunction with the convexity of $f^{r-1}$, the converse of H\"{o}lder's inequality and the fact that $\frac{1}{1-r}\log x$ is increasing when $0<r<1$.

$2 \Longrightarrow 3$: Obvious by taking $\lambda=\frac{1}{2}$. 

$3 \Longrightarrow 1$: We will prove the statement by contradiction. We first show an example borrowed from Cover and Zhang \cite{CZ94} to illustrate the ``mass transferring'' argument used in our proof. Consider the density $f(x)=3/2$ in the intervals $(0, 1/3)$ and $(2/3, 1)$. It is clear that $f$ is not $(r-1)$-concave. The joint distribution of $(X, Y)$ with $Y\equiv X$ is supported on the diagonal line $y=x$. The Radon-Nikodym derivative $g$ with respect to the one-dimensional Lebesgue measure on the line $y=x$ exists and is shown in Figure 1. We remove some ``mass'' from the diagonal line $y=x$ to the lines $y=x-2/3$ and $y=x+2/3$. The new Radon-Nikodym derivative $\hat{g}$ is shown in Figure 2. Let $(\hat{X}, \hat{Y})$ be a pair of random variables whose joint distribution possesses this new Radon-Nikodym derivative. It is easy to see that $\hat{X}$ and $\hat{Y}$ still have the same density $f$. But $\hat{X}+\hat{Y}$ is uniformly distributed on $(0, 2)$, and thus $h_r(\hat{X}+\hat{Y})=\log 2$. One can check that $h_r(2X)=\log(4/3)$.

\begin{figure}
    \centering
    \begin{minipage}{0.45\textwidth}
        \centering
        \begin{tikzpicture}[scale=2,extended line/.style={shorten >=-#1,shorten <=-#1},]
\draw [->](0,-0)--(0,2) node[right]{$y$};
\draw [->](0,0)--(2,0) node[right]{$x$};
\draw (0,0)--(0.5,0.5)node[anchor=south west]{$g=\frac{3}{2\sqrt 2}$};
\draw (1,1)--(1.5,1.5)node[anchor=south west]{$g=\frac{3}{2\sqrt 2}$};
\foreach \x/\xtext in {0.5/{1/3}, 1/{2/3}, 1.5/1}
{\draw (\x cm,1pt ) -- (\x cm,-1pt ) node[anchor=north] {$\xtext$};}
\foreach \y/\ytext in {0.5/{1/3}, 1/{2/3}, 1.5/1}
{\draw (1pt,\y cm) -- (-1pt ,\y cm) node[anchor=east] {$\ytext$};}
\end{tikzpicture} 
        \caption{$g$}
    \end{minipage}\hfill
    \begin{minipage}{0.45\textwidth}
        \centering
        \begin{tikzpicture}[scale=2,extended line/.style={shorten >=-#1,shorten <=-#1},]
\draw [->](0,-0)--(0,2) node[right]{$y$};
\draw [->](0,0)--(2,0) node[right]{$x$};
\draw (0,0)--(0.5,0.5)node[anchor=south west]{$\hat{g}=\frac{1}{\sqrt 2}$};
\draw (1,1)--(1.5,1.5)node[anchor=south west]{$\hat{g}=\frac{1}{\sqrt 2}$};
\draw (0,1)--(0.5,1.5)node[anchor=south west]{$\hat{g}=\frac{1}{2\sqrt 2}$};
\draw (1,0)--(1.5,0.5)node[anchor=south west]{$\hat{g}=\frac{1}{2\sqrt 2}$};
\foreach \x/\xtext in {0.5/{1/3}, 1/{2/3}, 1.5/1}
{\draw (\x cm,1pt ) -- (\x cm,-1pt ) node[anchor=north] {$\xtext$};}
\foreach \y/\ytext in {0.5/{1/3}, 1/{2/3}, 1.5/1}
{\draw (1pt,\y cm) -- (-1pt ,\y cm) node[anchor=east] {$\ytext$};}
\end{tikzpicture}
        \caption{$\hat{g}$}
    \end{minipage}
\end{figure}

Now we turn to the general case. Suppose that $f$ is not $(r-1)$-concave, i.e., $f^{r-1}$ is not concave (for $r>1$). We claim that there exists a set $A \subseteq \R^{2d}$ of positive Lebesgue measure on $\R^{2d}$ such that the inequality \begin{equation}\label{eq:not-concave}
2 f^{r-1}\left(\frac{x+y}{2}\right)<f^{r-1}(x)+f^{r-1}(y)
\end{equation} holds for all $(x,y) \in A$.
 Otherwise, the converse of \eqref{eq:not-concave} holds for almost every pair $(x, y)$, and thus $f^{r-1}$ is an almost mid-concave function (i.e., 1/2-concave). By Theorem 1 in  \cite{adamek2003almost}, $f^{r-1}$ is identical to a mid-concave function except on a set of Lebesgue measure 0. Without changing the distribution, we can modify $f$ such that $f^{r-1}$ is mid-concave. Using the equivalence of mid-concavity and concavity (under the Lebesgue measurability), after modification, $f^{r-1}$ is concave, i.e., $f$ is $(r-1)$-concave. This contradicts our assumption. Hence, there exists such a set $A$ with positive Lebesgue measure on $\R^{2d}$. Then there exists $y$ such that \eqref{eq:not-concave} holds for a set of $x$ with positive Lebesgue measure on $\R^d$.
 We rephrase this statement in a
form suitable for our purpose. There is $x_0\neq0$ such that the set 
\begin{equation}\label{eq:lambda}
\Lambda =\big\{x\in \R^d: 2f(x)^{r-1}<f(x+x_0)^{r-1}+f(x-x_0)^{r-1}
\big\}
\end{equation}
has positive Lebesgue measure on $\R^d$. For $\epsilon>0$,  we denote by $\Lambda(\epsilon)$ a ball of radius $\epsilon$ whose intersection with $\Lambda$ has positive Lebesgue measure on $\R^d$. Consider $(X, Y)$ such that $X\equiv Y$, where $X$ and $Y$ have the identical density $f$. Let $g(x, y)$ be the Radon-Nikodym derivative of $(X, Y)$ with respect to the $d$-dimensional Lebesgue measure on the ``diagonal line'' $y=x$. Now we build a new density $\hat{g}$ by translating a small amount of ``mass'' from ``diagonal points'' $(x-x_0, x-x_0)$ and $(x+x_0, x+x_0)$ to ``off-diagonal points'' $(x-x_0, x+x_0)$ and $(x+x_0, x-x_0)$. 
 To be more precise, we define the new joint density $\hat{g}$ as
\begin{eqnarray*}
\hat{g}(x, y)=g(x, y) {\bf 1}_{\{x=y\}}-\sqrt{d/2}\delta({\bf 1}_{\{(x-x_0, x-x_0): x\in \Lambda(\epsilon)\}}+{\bf 1}_{\{(x+x_0, x+x_0): x\in \Lambda(\epsilon)\}})\\
+\sqrt{d/2}\delta({\bf 1}_{\{(x-x_0, x+x_0): x\in \Lambda(\epsilon)\}}+{\bf 1}_{\{(x+x_0, x-x_0): x\in \Lambda(\epsilon)\}}),
\end{eqnarray*}
where $\delta>0$ and ${\bf 1}_S$ is the indicator function of the set $S$. The function $\hat{g}$ is supported on the ``diagonal line'' $y=x$ and ``off-diagonal segments'' $\{(x-x_0, x+x_0): x\in \Lambda(\epsilon)\}$ and $\{(x+x_0, x-x_0): x\in \Lambda(\epsilon)\}$, which are disjoint for sufficiently small $\epsilon>0$. (This is similar to Figure 2). When $\delta>0$ is small enough, $\hat{g}(x, y)$ is non-negative everywhere. Furthermore, our construction preserves the ``total mass''. Hence, the function $\hat{g}(x, y)$ is indeed a probability density with respect to the $d$-dimensional Lebesgue measure on the ``diagonal line'' and two ``off-diagonal segments''. Let $(\hat{X}, \hat{Y})$ be a pair with the joint density $\hat{g}(x, y)$. The marginals $\hat{X}$ and $\hat{Y}$ have the same distribution as that of $X$, since the ``positive mass'' on ``off-diagonal points'' complements the ``mass deficit'' on ``diagonal points'' when we project in the $x$ and $y$ directions. We claim that $\frac{\hat{X}+\hat{Y}}{2}$ has larger entropy than $\hat{X}$. One can check that the density of $\frac{\hat{X}+\hat{Y}}{2}$ is
$$
\hat{f}(x)=f(x)+\delta(2{\bf 1}_{\Lambda(\epsilon)}-{\bf 1}_{\Lambda(\epsilon)+x_0}-{\bf 1}_{\Lambda(\epsilon)-x_0}).
$$
Let $\Omega$ denote the union of $\Lambda(\epsilon)$, $\Lambda(\epsilon)+x_0$ and $\Lambda(\epsilon)-x_0$. Then we have
\begin{equation}\label{eq:ent-avg}
h_r\left(\frac{\hat{X}+\hat{Y}}{2}\right)=\frac{1}{1-r}\log \left(\int_{\Omega}\hat{f}(x)^rdx+\int_{\Omega^c}f^r(x)dx\right).
\end{equation}
Since $x_0\neq0$, for $\epsilon>0$ small enough, $\Omega$ is the union of disjoint translates of $\Lambda(\epsilon)$. When $\delta>0$ is sufficiently small, we have
\begin{eqnarray}
\int_{\Omega}\hat{f}(x)^rdx
& = & \int_{\Lambda(\epsilon)} \left[(f(x)+2\delta)^r+(f(x+x_0)-\delta)^r+(f(x-x_0)-\delta)^r\right] dx \notag \\
& < & \int_{\Lambda(\epsilon)}\left[f(x)^r+f(x+x_0)^r+f(x-x_0)^r
\right]dx \label{eq:compare} \\
& = & \int_{\Omega}f(x)^rdx, \label{eq:ent-comp}
\end{eqnarray}
where inequality \eqref{eq:compare} follows from the observation that for $x\in\Lambda(\epsilon)\subset \Lambda$ (see \eqref{eq:lambda}) the derivative of the integrand at $\delta=0$ is
\begin{equation}
r[2f(x)^{r-1}-f(x-x_0)^{r-1}-f(x+x_0)^{r-1}]<0.
\end{equation}
Since $r>1$, \eqref{eq:ent-avg} together with \eqref{eq:ent-comp} implies that
$$
h_r\left(\frac{\hat{X}+\hat{Y}}{2}\right)>\frac{1}{1-r}\log \left(\int_{\Omega}f(x)^rdx+\int_{\Omega^c}f(x)^rdx\right)=h(X)=h(\hat{X}).
$$
This is contradictory to our assumption. Hence, $f$ has to be $(r-1)$-concave. For $1-1/d<r<1$, we redefine the set $\Lambda$ by reversing inequality \eqref{eq:lambda}, and inequality \eqref{eq:compare} will be also reversed. We will arrive at the same conclusion. 
\end{proof}

\begin{remark}
The proof of $1 \Longrightarrow 2$ is an immediate consequence of Theorem 3.36 in \cite{MMX17:1}. The theorem there draws heavily on the ideas of \cite{XMM16:isit}, where a related study, deriving the Schur convexity of R\'enyi entropies under the assumption of exchangeability and $s$-concavity of the random variables, generalizing Yu's results in \cite{Yu08:2} on the entropies of sums of i.i.d. log-concave random variables. Although we state Theorem \ref{thm:max-entropy} for two random vectors, the argument also works for more than two random vectors. Hence, it implies  the seemingly stronger Theorem \ref{thm: generalization of cover-zhang}.
\end{remark}

As an immediate consequence of Theorem \ref{thm:max-entropy}, we have the following reverse R\'enyi EPI for random vectors with the same distribution. 
\begin{corollary}
Let $s>-1/d$ and let $r=1+s$. Let $X$ and $Y$ be (possibly dependent) random vectors in $\R^d$ with the same density $f$ being $s$-concave. Then we have
$$
 N_r(X+Y) \leq 4 N_r(X). 
$$
\end{corollary}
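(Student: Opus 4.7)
The plan is to derive the corollary as an immediate consequence of the implication $1 \Longrightarrow 3$ of Theorem \ref{thm:max-entropy} combined with the scaling behavior of the R\'enyi entropy. Since $X$ and $Y$ share the $s$-concave density $f$ and the hypothesis $s > -1/d$ with $r = 1+s$ matches the range of Theorem \ref{thm:max-entropy}, I would first apply that theorem to obtain
$$
h_r\!\left(\frac{X+Y}{2}\right) \leq h_r(X).
$$
This is the only substantive inequality needed; everything that follows is bookkeeping.

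Next, I would invoke the standard scaling identity $h_r(aW) = h_r(W) + d\log|a|$, valid for any nonzero scalar $a$ and any random vector $W$ in $\R^d$. This is a one-line change-of-variables consequence of the definition in \eqref{eq:r-renyi-entropy}: the density of $aW$ is $|a|^{-d}f_W(\cdot/a)$, and a substitution in $\int f_{aW}^r$ produces the factor $|a|^{d(1-r)}$, whose logarithm cancels the prefactor $1/(1-r)$ to give exactly $d\log|a|$. Applying this with $a=2$ and $W=(X+Y)/2$ converts the previous inequality into
$$
h_r(X+Y) \;=\; h_r\!\left(\frac{X+Y}{2}\right) + d\log 2 \;\leq\; h_r(X) + d\log 2.
$$

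Finally, exponentiating through $N_r(\cdot) = e^{2h_r(\cdot)/d}$ yields $N_r(X+Y) \leq e^{2\log 2}\,N_r(X) = 4\,N_r(X)$, which is the claim. There is no genuine obstacle: all the work sits inside Theorem \ref{thm:max-entropy}, and the sharp-looking constant $4$ is simply $2^2$ coming from the rescaling by a factor of $2$. The same calculation shows that replacing $\tfrac{X+Y}{2}$ by $\lambda X + (1-\lambda)Y$ in Theorem \ref{thm:max-entropy} would yield the slightly more general bound $N_r(\lambda X + (1-\lambda)Y) \leq N_r(X)$, from which the stated corollary is the symmetric $\lambda = 1/2$ case after rescaling.
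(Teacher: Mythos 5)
Your proof is correct and is exactly the argument the paper intends: the paper states the corollary as an immediate consequence of Theorem \ref{thm:max-entropy} without writing out the details, and your combination of the implication $1 \Longrightarrow 3$ with the scaling identity $h_r(aW)=h_r(W)+d\log|a|$ (giving $N_r(X+Y)=4N_r\bigl(\tfrac{X+Y}{2}\bigr)\leq 4N_r(X)$) is precisely that omitted bookkeeping.
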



\begin{thebibliography}{10}

\bibitem{adamek2003almost}
M.~Adamek.
\newblock Almost $\lambda$-convex and almost Wright-convex functions.
\newblock {\em Mathematica Slovaca}, 53(1):67--73, 2003.

\bibitem{BNT15}
K.~Ball, P.~Nayar and T.~Tkocz.
\newblock A reverse entropy power inequality for log-concave random vectors.
\newblock {\em Studia Math}, 235(1):17--30, 2016.

\bibitem{Bar86}
A.~R. Barron.
\newblock Entropy and the central limit theorem.
\newblock {\em Ann. Probab.}, 14:336--342, 1986.

\bibitem{Bec75}
W.~Beckner.
\newblock Inequalities in {F}ourier analysis.
\newblock {\em Ann. of Math. (2)}, 102(1):159--182, 1975.

\bibitem{BRR76}
R.~N. Bhattacharya and R.~Ranga~Rao.
\newblock Normal approximation and asymptotic expansions.
\newblock {\em John Wiley \& Sons, Inc. Also: Soc. for Industrial and Appl.
  Math., Philadelphia, 2010}, 1976.

\bibitem{Blu19}
H. Blumberg, 
\newblock On convex functions. 
\newblock {\em Trans. Amer. Math. Soc.}, 20, 40--44, 1919.

\bibitem{BCG16:2}
S. G. Bobkov, G. P. Chistyakov and F. G\"otze.
\newblock R{\'e}nyi divergence and the central limit theorem.
\newblock Ann. Probab. 47 (2019), no. 1, pp. 270-323. 
	


\bibitem{BM12:jfa}
S.~Bobkov and M.~Madiman.
\newblock Reverse {B}runn-{M}inkowski and reverse entropy power inequalities
  for convex measures.
\newblock {\em J. Funct. Anal.}, 262:3309--3339, 2012.

\bibitem{BC14}
S.~G. Bobkov and G.~P. Chistyakov.
\newblock Bounds for the maximum of the density of the sum of independent
  random variables.
\newblock {\em Zap. Nauchn. Sem. S.-Peterburg. Otdel. Mat. Inst. Steklov.
  (POMI)}, 408(Veroyatnost i Statistika. 18):62--73, 324, 2012.

\bibitem{BC15:1}
S.~G. Bobkov and G.~P. Chistyakov.
\newblock Entropy power inequality for the {R\'enyi} entropy.
\newblock {\em IEEE Trans. Inform. Theory}, 61(2):708--714, February 2015.

\bibitem{BM16}
S.~G. Bobkov and A.~Marsiglietti.
\newblock Variants of the entropy power inequality.
\newblock {\em IEEE Trans. Inform. Theory}, 63(12):7747--7752,
  2017.

\bibitem{BM18}
S.~G. Bobkov and A.~Marsiglietti.
\newblock Asymptotic behavior of {R}\'enyi entropy in the central limit
  theorem.
\newblock {\em Preprint, arXiv:1802.10212}, 2018.

\bibitem{Borell74}
C.~Borell.
\newblock Convex measures on locally convex spaces.
\newblock {\em Ark. Mat.}, 12:239--252, 1974.

\bibitem{Bor75a}
C.~Borell.
\newblock Convex set functions in {$d$}-space.
\newblock {\em Period. Math. Hungar.}, 6(2):111--136, 1975.

\bibitem{BL76b}
H.~J. Brascamp and E.~H. Lieb.
\newblock Best constants in Young's inequality, its converse, and its
  generalization to more than three functions.
\newblock {\em Adv. in Math.}, 20(2):151--173, 1976.

\bibitem{BGVV-book-14}
S.~Brazitikos, A.~Giannopoulos, P.~Valettas and B.-H. Vritsiou.
\newblock Geometry of isotropic convex bodies.
\newblock {\em AMS-Mathematical Surveys and Monographs 196}, 2014.

\bibitem{Bus49}
H.~Busemann.
\newblock A theorem on convex bodies of the {B}runn-{M}inkowski type.
\newblock {\em Proc. Nat. Acad. Sci. U. S. A.}, 35:27--31, 1949.

\bibitem{CM}
A. Chademan, and F. Mirzapour.
\newblock Midconvex functions in locally compact groups.
\newblock {\em Proc. Amer. Math. Soc.}, 127, 2961--2968, 1999.

\bibitem{Cos85b}
M.~H.~M. Costa.
\newblock A new entropy power inequality.
\newblock {\em IEEE Trans. Inform. Theory}, 31(6):751--760, 1985.

\bibitem{CZ94}
T.~M. Cover and Z.~Zhang.
\newblock On the maximum entropy of the sum of two dependent random variables.
\newblock {\em IEEE Trans. Inform. Theory}, 40(4):1244--1246, 1994.

\bibitem{DCT91}
A.~Dembo, T.~M. Cover and J.~A. Thomas.
\newblock Information-theoretic inequalities.
\newblock {\em IEEE Trans. Inform. Theory}, 37(6):1501--1518, 1991.

\bibitem{FLM15}
M.~Fradelizi, J.~Li and M.~Madiman.
\newblock Concentration of information content for convex measures.
\newblock {\em Preprint, {\tt arXiv:1512.01490}}, 2015.

\bibitem{GG99}
R.~J. Gardner and A.~Giannopoulos.
\newblock $p$-cross-section bodies.
\newblock {\em Indiana Univ. Math. J.}, 48(2):593--614, 1999.

\bibitem{GK68:book}
B.~V. Gnedenko and A.~N. Kolmogorov.
\newblock {\em Limit distributions for sums of independent random variables}.
\newblock Translated from the Russian, annotated, and revised by K. L. Chung.
  With appendices by J. L. Doob and P. L. Hsu. Revised edition. Addison-Wesley
  Publishing Co., Reading, Mass.-London-Don Mills., Ont., 1968.

\bibitem{Hoe63}
W.~Hoeffding.
\newblock Probability inequalities for sums of bounded random variables.
\newblock {\em J. Amer. Statist. Assoc.}, 58:13--30, 1963.

\bibitem{Joh04:book}
O.~Johnson.
\newblock {\em Information theory and the central limit theorem}.
\newblock Imperial College Press, London, 2004.

\bibitem{Li17}
J.~Li.
\newblock R{\'e}nyi entropy power inequality and a reverse.
\newblock {\em Studia Math}, 242:303 -- 319, 2018.


\bibitem{LM18:isit}

J. Li and J. Melbourne. 
\newblock Further investigations of the maximum entropy of the sum of two dependent random variables.
\newblock In {\em Proc. IEEE Intl. Symp. Inform. Theory.,} pages 1969--1972, Vail, USA, July 2018.

\bibitem{Lie78}
E.~H. Lieb.
\newblock Proof of an entropy conjecture of {W}ehrl.
\newblock {\em Comm. Math. Phys.}, 62(1):35--41, 1978.

\bibitem{MMX17:1}
M.~Madiman, J.~Melbourne, and P.~Xu.
\newblock Forward and reverse entropy power inequalities in convex geometry.
\newblock {\em Convexity and Concentration}, pages 427--485, 2017.

\bibitem{MMX17:2}
M.~Madiman, J.~Melbourne, and P.~Xu.
\newblock Rogozin's convolution inequality for locally compact groups.
\newblock {\em arXiv preprint arXiv:1705.00642} 2017.

\bibitem{XMM17:isit:1}
M.~Madiman, J.~Melbourne, and P.~Xu.
\newblock Infinity-{R}\'enyi entropy power inequalities.
\newblock {\em 2017 IEEE International Symposium on Information Theory (ISIT)} pages 2985--2989, Aachen, Germany June 2017.

\bibitem{MM18-IEEE}
A.~Marsiglietti and J.~Melbourne.
\newblock On the entropy power inequality for the {R}\'enyi entropy of order
  [0, 1].
\newblock {\em IEEE Trans. Inform. Theory,} doi:
  10.1109/TIT.2018.2877741, 2018.

\bibitem{MarsigliettiMelbourne:isit}
A.~Marsiglietti and J.~Melbourne.
\newblock A {R\'enyi} entropy power inequality for log-concave vectors and
  parameters in $[0,1]$.
\newblock In {\em Proc. IEEE Intl. Symp. Inform. Theory.,} pages 1964--1968, Vail, USA, July 2018.

\bibitem{Mil86}
V.~D. Milman.
\newblock In\'egalit\'e de {B}runn-{M}inkowski inverse et applications \`a la
  th\'eorie locale des espaces norm\'es.
\newblock {\em C. R. Acad. Sci. Paris S\'er. I Math.}, 302(1):25--28, 1986.

\bibitem{RS16}
E.~Ram and I.~Sason.
\newblock On {R\'enyi} entropy power inequalities.
\newblock {\em IEEE Trans. Inform. Theory}, 62(12):6800--6815,
  2016.

\bibitem{rioul2018renyi}
O.~Rioul.
\newblock R{\'e}nyi entropy power inequalities via normal transport and
  rotation.
\newblock {\em Entropy}, 20(9):641, 2018.

\bibitem{ST14}
G.~Savar{\'e} and G.~Toscani.
\newblock The concavity of {R}{\'e}nyi entropy power.
\newblock {\em IEEE Trans. Inform. Theory}, 60(5):2687--2693, May 2014.

\bibitem{Sha48}
C.~E. Shannon.
\newblock A mathematical theory of communication.
\newblock {\em Bell System Tech. J.}, 27:379--423, 623--656, 1948.

\bibitem{Sie20}
W. Sierpinski.
\newblock Sur les fonctions convexes mesurables. 
\newblock {\em Fund. Math.}, 1:125--129, 1920.

\bibitem{Sta59}
A.~J. Stam.
\newblock Some inequalities satisfied by the quantities of information of
  {F}isher and {S}hannon.
\newblock {\em Information and Control}, 2:101--112, 1959.

\bibitem{XMM16:isit}
P.~Xu, J.~Melbourne and M.~Madiman.
\newblock Reverse entropy power inequalities for s-concave densities.
\newblock In {\em Proc. IEEE Intl. Symp. Inform. Theory.,} pages 2284--2288, Barcelona, Spain, July 2016.

\bibitem{Yu08:2}
Y.~Yu.
\newblock Letter to the editor: {O}n an inequality of {K}arlin and {R}inott
  concerning weighted sums of i.i.d. random variables.
\newblock {\em Adv. in Appl. Probab.}, 40(4):1223--1226, 2008.

\end{thebibliography}

\vspace{0.8cm}

\noindent Jiange Li \\
Einstein Institute of Mathematics \\
Hebrew University of Jerusalem \\
Jerusalem, IL 9190401 \\
E-mail: jiange.li@mail.huji.ac.il

\vspace{0.8cm}

\noindent Arnaud Marsiglietti \\
Department of Mathematics \\
University of Florida \\
Gainesville, FL 32611, USA \\
E-mail: a.marsiglietti@ufl.edu

\vspace{0.8cm}

\noindent James Melbourne \\
Electrical and Computer Engineering \\
University of Minnesota \\
Minneapolis, MN 55455, USA \\
E-mail: melbo013@umn.edu

\end{document}